\newcommand{\be}{\begin{equation}}
\newcommand{\ee}{\end{equation}}
\newcommand{\beq}{\begin{eqnarray}}
\newcommand{\eeq}{\end{eqnarray}}
\newtheorem{claim}{Claim}[section]
\newtheorem{thm}{Theorem}[section]
\newtheorem{lma}{Lemma}[section]
\newtheorem{cor}{Corollary}[section]
\newtheorem{defn}{Definition}[section]
\theoremstyle{remark}
\numberwithin{equation}{section}
\newcommand*\owedge{\mathpalette\@owedge\relax}
\newcommand*\@owedge[1]{%
  \mathbin{%
    \ooalign{%
      $#1\m@th\bigcirc$\cr
      \hidewidth$#1\m@th\wedge$\hidewidth\cr
    }%
  }%
}
\def\p{\partial}
\def\p{\partial}
\def\lf{\left}
\def\ri{\right}
\def\e{\epsilon}
\def\Pi{\overline{\displaystyle{\mathbb{II}}}}
\def\heat{\lf(\Delta -\frac{\p}{\p t}\ri)}
\def\K{K\"ahler }
\def\heat{\lf(\frac{\p}{\p t}-\Delta\ri)}
\def\lf{\left}
\def\ri{\right}
\def\e{\epsilon}
\def\p{\partial}
\def\Rm{\operatorname{Rm}}
\def\K{K\"ahler\ }
\def\be{\begin{equation}}
\def\ee{\end{equation}}
\def\bee
\def\eee{\end{equation*}}
\def\bee{\begin{equation*}}
\def\eee{\end{equation*}}
\def\e{\epsilon}
\def\lf{\left}
\def\heat{\lf(\frac{\p}{\p t}-\Delta\ri)}
\def\ri{\right}
\def\p{\partial}
\def\K{K\"ahler }
\def\be{\begin{equation}}
\def\ee{\end{equation}}
\def\lf{\left}
\def\ri{\right}
\def\e{\epsilon}
\def\Rm{\text{\rm Rm}}
\def\p{\partial}
\def\p{\partial}
\def\p{\partial}
\def\bee{\begin{equation*}}
\def\eee{\end{equation*}}
\def\e{\epsilon}
\def\lf{\left}
\def\heat{\lf(\frac{\p}{\p t}-\Delta\ri)}
\def\ri{\right}
\def\p{\partial}
\def\K{K\"ahler }
\def\be{\begin{equation}}
\def\ee{\end{equation}}
\def\lf{\left}
\def\ri{\right}
\def\e{\epsilon}
\def\Rm{\text{\rm Rm}}
\def\p{\partial}
\def\p{\partial}
\def\p{\partial}
\def\bee{\begin{equation*}}
\def\eee{\end{equation*}}
\def\e{\epsilon}
\def\lf{\left}
\def\heat{\lf(\frac{\p}{\p t}-\Delta\ri)}
\def\ri{\right}
\def\p{\partial}
\def\K{K\"ahler }
\def\be{\begin{equation}}
\def\ee{\end{equation}}
\def\lf{\left}
\def\ri{\right}
\def\e{\epsilon}
\def\Rm{\text{\rm Rm}}
\def\p{\partial}
\def\p{\partial}
\def\p{\partial}
\def\bee{\begin{equation*}}
\def\eee{\end{equation*}}
\def\e{\epsilon}
\def\lf{\left}
\def\heat{\lf(\frac{\p}{\p t}-\Delta\ri)}
\def\ri{\right}
\def\p{\partial}
\def\K{K\"ahler }
\def\be{\begin{equation}}
\def\ee{\end{equation}}
\def\lf{\left}
\def\ri{\right}
\def\e{\epsilon}
\def\Rm{\text{\rm Rm}}
\def\p{\partial}
\def\p{\partial}
\def\p{\partial}
\begin{document}           

\title[]{Weakly PIC1 manifolds with maximal volume growth}

\author{Fei He}
\address[Fei He]{School of Mathematical Science, Xiamen University, 422 S. Siming Rd., Xiamen, China 361000}
\email{hefei@xmu.edu.cn}

 \author{Man-Chun Lee}
\address[Man-Chun Lee]{Department of
 Mathematics, University of British Columbia, 121-1984 Mathematics Road, Vancouver, B.C. V6T 1Z2, Canada.}
\email{mclee@math.ubc.ca}

\footnote{F.H. was partially supported by the Fundamental
Research Funds for the Central Universities Grant No. 20720180007.}

\maketitle

\begin{abstract}
In this article we use Ricci flow to show that complete PIC1 manifolds with maximal volume growth are diffeomorphic to $\mathbb{R}^n$. One of the key ingredients is local estimates of curvature lower bounds on an initial time interval of the Ricci flow. As another application of these estimates we obtain pseudolocality type results related to the PIC1 condition.
\end{abstract}

\section{Introduction}
The notion of isotropic curvature was introduced by the seminal work of Micallef and Moore \cite{MicallefMoore1988}. A Riemannian manifold with dimension at least 4 has nonnegative isotropic curvature if $R(\varphi, \bar{\varphi}) \geq 0$ where $\varphi = (e_1+i e_2)\wedge (e_3 + ie_4)$ for any orthonormal 4-frame $\{e_1,e_2,e_3,e_4\}$ and $R$ is the complex linear extension of the Riemannian curvature operator. We say $(M,g)$ is a weakly PIC1 Riemannian manifold if its product with $\mathbb{R}$ has nonnegative isotropic curvature, an algebraic characterization of this curvature condition is given by the following if $M$ has dimension at least 4.
\begin{defn}
The curvature type operator $A$ is weakly $PIC1$ if we have
$$A_{1313}+\lambda^2 A_{1414}+A_{2323}+\lambda^2A_{2424}-2\lambda A_{1234}\geq 0$$
for all $p\in M$, all orthonormal four-frames $\{e_1,e_2,e_3,e_4\}$, all $\lambda \in [0,1]$. For notational convenience, we say that $A\in PIC1$.
\end{defn}
 It is immediate to see a manifold with weakly PIC1 must have  nonnegative Ricci curvature, hence it has at most Eulidean volume growth by the volume comparison theorem. If $M$ has dimension 3 then by simple calculation we see that weakly PIC1 is equivalent to nonnegative Ricci curvature. Complete manifolds with nonnegative Ricci curvature must be covered by $\mathbb{S}^3$, $\mathbb{S}^2\times \mathbb{R}$ or $\mathbb{R}^3$ in dimension 3 \cite{Liu2013}, while in higher dimensions their topology can be much more complicated (see for ex. \cite{Menguy2000}). Weakly PIC1 condition is indeed much stronger in higher dimensions, in this short note we want to show that if such a manifold has maximal volume growth then its topology is trivial.

\begin{thm}\label{main theorem}
Let $(M,g)$ be a complete weakly PIC1 Riemannian manifold with dimension $n \geq 4$, if $(M,g)$ has maximal volume growth, i.e.
\[Vol B(r) \geq v r^n  \]
for some $v >0$ and $\forall r > 0$, then $M$ is diffeomorphic to $\mathbb{R}^n$.
\end{thm}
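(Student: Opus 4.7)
The plan is to deform $g$ by the Ricci flow and identify the diffeomorphism type of $M$ by extracting a rescaled limit at infinity.

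First, using the local curvature lower bound estimates established earlier in this paper, one solves the Ricci flow $(M, g(t))$ on a short interval $[0,T]$ with $g(0)=g$. The weakly PIC1 condition is preserved along the flow by the maximum principle arguments of Brendle (building on Nguyen and Wilking), suitably adapted to the noncompact setting by means of the local estimates; in particular $\Ric(g(t)) \geq 0$ for all $t \in [0,T]$.

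Second, the maximal volume growth hypothesis, combined with the Bishop--Gromov monotonicity under $\Ric \geq 0$ and the pseudolocality-type consequences of the local estimates (the second application advertised in the abstract), yields uniform $\kappa$-non-collapsing as well as a universal curvature decay bound of the form $|\Rm|(x, t) \leq C/t$. This allows the flow to be continued for all $t > 0$ and produces a global solution with controlled geometry.

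Third, rescale by setting $g_\lambda(t) := \lambda^{-1} g(\lambda t)$ and let $\lambda \to \infty$. Hamilton's compactness theorem, applied with $\kappa$-non-collapsing and the $C/t$ curvature decay, yields a subsequential smooth Cheeger--Hamilton limit $(M_\infty, g_\infty(t), x_\infty)$ that is a complete weakly PIC1 expanding gradient Ricci soliton with maximal volume growth, whose underlying smooth manifold $M_\infty$ is diffeomorphic to $M$. By the rigidity of expanding solitons with nonnegative curvature conditions and maximal volume growth (Deruelle, Lai, and the cone reduction in the PIC1 category), $(M_\infty, g_\infty)$ is the Gaussian expander on $\mathbb{R}^n$, whence $M$ is diffeomorphic to $\mathbb{R}^n$.

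The main obstacle is the second step: upgrading the short-time Ricci flow provided by the local estimates to a long-time solution with uniform curvature decay, using only the preserved weakly PIC1 condition together with the maximal volume growth of the initial metric. Once long-time existence with the $C/t$ bound and $\kappa$-non-collapsing are secured, the rescaling argument and the soliton rigidity are comparatively standard; the delicate point is therefore the interplay between the local curvature lower bounds proved in this paper, the preserved PIC1 invariance, and the global volume hypothesis.
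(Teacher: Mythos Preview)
Your first two steps line up with what the paper does: the short-time existence, preservation of PIC1, and the long-time solution with $|\Rm|\le C/t$ are exactly the content of Theorem~\ref{RF} and Corollary~\ref{longtimesln}. So far so good.

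The gap is in your third step. You assert that the pointed Cheeger--Hamilton limit $(M_\infty,g_\infty(t),x_\infty)$ has underlying manifold $M_\infty$ diffeomorphic to $M$. This is not a consequence of Hamilton compactness: pointed Cheeger--Gromov convergence only produces diffeomorphisms from exhausting precompact domains of $M_\infty$ into $M$, and in general the limit manifold need not be diffeomorphic to the original one. Even if you succeed in identifying $M_\infty$ with $\mathbb{R}^n$ via soliton rigidity, all you learn is that $M$ is exhausted by open sets diffeomorphic to Euclidean balls (namely the $g(\lambda)$-balls of radius $\sim\sqrt{\lambda}$ around $p$), and turning such an exhaustion into a global diffeomorphism $M\cong\mathbb{R}^n$ still requires a gluing argument. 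That gluing is precisely what the paper carries out in Section~5, and it is not bypassed by the blow-down.

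The paper's route is more direct and avoids the soliton machinery entirely. From Corollary~\ref{longtimesln} one already has $\mathrm{inj}_{g(t)}(p)\ge\sqrt{C_0^{-1}t}\to\infty$ and $\Ric(g(t))\ge 0$. The exponential maps $\exp_{t_i}$ at a fixed point $p$ then embed Euclidean balls of radius $i$ into $M$, and since $\Ric\ge 0$ the images $B_{t_i}(p,i)$ exhaust $M$. Section~5 shows how to splice these embeddings into a single diffeomorphism by pulling back the time-dependent exponential maps through a carefully chosen isotopy of $\mathbb{R}^n$. Your blow-down argument, even if completed, would ultimately need the same ingredients (injectivity radius growth plus a gluing of charts), so the detour through expanding solitons does not buy anything here.
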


Clearly $\mathbb{S}^2 \times \mathbb{R}^{n-2}$ with the standard metric is weakly PIC1, so the volume growth assumption in the above theorem cannot be dropped. However, we wonder if it can be weakened to $Vol B(r) > c r^{n-2+\e}$ for some small number $\e>0$. The proof in this note use the tool of Ricci flow on complete manifolds, the maximal volume growth assumption is used to guarantee the long time existence of such a flow without finite time singularity.

In section 2, we first discuss the preservation under complete Ricci flow of certain nonnegative curvature conditions including weakly PIC1, which may be of independent interest. The method is by localizing the maximal principle to obtain lower bound estimates of the curvature, which is the same as in \cite{LeeTam2017}. This method is actually robust enough to work for some other nonnegativity conditions which will not be discussed here.  In order to prove Theorem \ref{main theorem}, we exhibit in section 4 the existence of Ricci flow on noncollapsed complete manifolds with weakly PIC1 by the same method as in \cite{LeeTam2017-2} and \cite{Hochard2016}, we also cite heavily from \cite{CabezasBamlerWilking2017} and \cite{SimonTopping2016}. The short-time existence of Ricci flow in this situation also follows from a very recent work \cite{Lai2018}. Under the assumption of Theorem \ref{main theorem} we can show the solution of Ricci flow exists for all time, has nonnegative Ricci curvature and injectivity radius $\to \infty$, we can then construct a diffeomorphism by an elementary argument detailed in section 5.

In section 3, besides proving necessary curvature estimates for establishing the existence of Ricci flow, we obtain some pseudolocality type results related to PIC1 condition. Recall that Perelman's pseudolocality theorem (\cite{P}) provides an interior curvature estimate for the Ricci flow, given that an initial ball is isoperimetrically close to the Euclidean space and has scalar curvature bounded from below, and the global assumption that the Ricci flow solution is complete with bounded curvature. This result played an important role in the study of complete noncompact Ricci flow. An alternative version of pseudolocality has been obtained in \cite{TianWang2015}, where the initial ball was required to have almost Euclidean volume and almost nonnegative Ricci curvature. In dimension 3, the pseudolocality has been substantially improved by \cite{SimonTopping2016} where the Ricci curvature and volume are only assumed to be bounded from below, hence not necessarily close to the Euclidean space. Using the local preservation estimates in section 2 and the method of \cite{SimonTopping2016}, we formulate a pseudolocality statement assuming PIC1 condition and volume lower bound on the initial ball, see Theorem \ref{pic1pse}. When the initial ball has only PIC1 bounded from below, we need to assume almost Euclidean volume lower bound, see Theorem \ref{pse-almost}. As an application of Theorem \ref{pse-almost}, we obtain Corollary \ref{local-doubling-estimate}, which slightly improves the main theorem in \cite{Lu2010} by relaxing the bounded curvature assumption on the whole space-time to that on positive time slices.

{\it Acknowledgement}: The second author would like to thank his advisor Professor Luen-Fai Tam for his constant support and teaching over years. 

\section{Preservation of curvature conditions}

The main curvature condition of interest in this article is weakly PIC1, however, the method here actually works for some other curvature conditions. Here we present the argument in a more general setting.

Now let $g(t)$ be a solution of Ricci flow. By the trick of Uhlenbeck, we may assume that the curvature $Rm(g(t))$ evolves under
$$(D_t-\Delta) Rm=2Q(Rm)$$
where $Q(R)=R^2+R^{\#}$.

As described in \cite{CabezasBamlerWilking2017}, we regard $Rm$ as a symmetric bilinear form on $\mathfrak{so}(n,\mathbb{R})$ and then extend complex bi-linearly to a map $Rm:\mathfrak{so}(n,\mathbb{C})\times \mathfrak{so}(n,\mathbb{C})\rightarrow \mathbb{C}$. Then we define the convex cone by
$$\mathfrak{C}({S})=\{ Rm\in S^2_B(\mathfrak{so}(n)): Rm(v,\bar v)\geq 0\;\;\text{for all}\;\;v\in S\}$$
where ${S}\subset \mathfrak{so}(n,\mathbb{C})$ is a subset that is invariant under the natural $SO(n,\mathbb{C})$ action and satisfy the following: For any $Rm\in \partial\mathfrak{C}$, $v\in {S}$ with $Rm(v,\bar v)=0$, we have
\begin{align}\label{cone}
Q(Rm)(v,\bar v)\geq 0.
\end{align}

In particular, the PIC1 condition is equivalent to say that $Rm\in \mathfrak{C}({S})$ where
$${S}=\{  v\in\mathfrak{so}(n,\mathbb{C}): \text{rank}(v)=2,\;v^3=0\}$$
which is known to satisfy \eqref{cone} by the work of Brendle and Schoen \cite{BrendleSchoen2009} and Nguyen \cite{Nguyen2010}. We refer interested readers to \cite{Wilking2013} for a more complete list of ${S}$ satisfying $\eqref{cone}$. We also would like to point out that recently Brendle had discovered a new cone \cite{Brendle2018} which satisfies \eqref{cone}.

In the following, we will establish a local estimate on how $Rm_{g(t)}$ fail to stay inside $\mathfrak{C}({S})$ under the assumption $|Rm|\leq at^{-1}$ and hence global preservation of $Rm(g(t))\in\mathfrak{C}$. For simplicity, we will denote $\mathfrak{C}=\mathfrak{C}({S})$ where ${S}$ is a subset in $\mathfrak{so}(n,\mathbb{C})$ such that \eqref{cone} holds.

\begin{thm}\label{pic}
Let $\mathfrak{C}({S})$ be a convex cone satisfying \eqref{cone}. Suppose $(M,g(t))$ is a Ricci flow for $t\in[0,T]$, $a\geq 3,\,\sigma>0$ and $p\in M$ such that $B_t(p,1+4\sigma)\subset\subset M$ for all $t\in [0,T]$. Assume further that
\begin{enumerate}
\item $Rm_{g(0)}\in \mathfrak{C}$ on $B_0(p,1+4\sigma)$,
\item $\displaystyle |Rm(g(t))|\leq \frac{a}{t}$ on $B_t(p,1+4\sigma)$ for all $t\in (0,T]$.
\end{enumerate}
Then there exists $c(n),\,D(n)>0$ such that on $B_t(p,1)$, $k\in \mathbb{N}$,
$$Rm_{g(t)} + t^k \mathcal{I}\in \mathfrak{C}$$
for all $t\leq  T\wedge c(n)\sigma^2 a^{-1}\wedge D\sigma^4k^{-2}$.
Here $\mathcal{I}$ denotes the constant curvature operator of scalar curvature $n(n-1)$.
\end{thm}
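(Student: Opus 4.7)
The plan is to apply Hamilton's tensorial maximum principle to the modified curvature operator $\widetilde{Rm}(t,x) := Rm_{g(t)}(x) + u(t,x)\mathcal{I}$ for an appropriately chosen nonnegative barrier function $u$ defined on $B_t(p, 1+4\sigma) \times [0, T]$. Concretely, I would seek $u$ with two properties: $u(t,x) \leq t^k$ on the inner ball $B_t(p, 1)$, and $u(t,x) \geq C(n) a/t$ on the outer shell $B_t(p, 1+4\sigma) \setminus B_t(p, 1+2\sigma)$. If such a $u$ can be made a strict supersolution of a suitable heat-type inequality, and $\widetilde{Rm}(0,\cdot)\in\mathfrak{C}$ holds (which follows from hypothesis (1) together with $\mathcal{I}\in\mathfrak{C}$), then the maximum principle forces $\widetilde{Rm}\in\mathfrak{C}$ throughout $B_t(p,1+4\sigma)$, and the pointwise bound $u\leq t^k$ on the inner ball then yields the conclusion.

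The heart of the argument is the analysis at a hypothetical first failure point $(t_0, x_0, v_0)$ with $v_0\in S$, $|v_0|=1$, and $\widetilde{Rm}(v_0,\bar v_0)=0$. Using $(\partial_t-\Delta)Rm = 2Q(Rm)$ and $(\partial_t-\Delta)(u\mathcal{I}) = (\partial_t u-\Delta u)\mathcal{I}$, one writes $Rm = \widetilde{Rm}-u\mathcal{I}$ and expands $Q$ bilinearly:
\[
Q(Rm)(v_0,\bar v_0)=Q(\widetilde{Rm})(v_0,\bar v_0)+u^2 Q(\mathcal{I})(v_0,\bar v_0)-2u\,B(\widetilde{Rm},\mathcal{I})(v_0,\bar v_0),
\]
where $B$ denotes the symmetric bilinear polarization of $Q$. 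The first term is $\geq 0$ by condition \eqref{cone}, the second is $\geq 0$ because $\mathcal{I}$ is the round-sphere operator (for which $Q(\mathcal{I})$ is a positive multiple of $\mathcal{I}$), and the cross term is controlled by $C(n)\,u\,|\widetilde{Rm}|\leq C(n)\,u\,(a/t+u)$ using hypothesis (2). Plugging into the tensorial maximum principle \`a la Hamilton–Brendle, one is forced to require the barrier inequality
\[
\partial_t u-\Delta u\;\geq\;C(n)\,\frac{a\,u}{t}\quad\text{strictly on }B_t(p,1+4\sigma).
\]

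I would construct the barrier in the ansatz $u(t,x):=t^k \exp\bigl(\Lambda(t)\,\chi(t,x)\bigr)$, where $\chi$ is a Perelman-type parabolic cutoff on the Ricci flow background with $\chi=0$ on $B_t(p,1)$, $\chi\geq 1$ outside $B_t(p,1+2\sigma)$, $0\leq\chi\leq 2$, and $|(\partial_t-\Delta)\chi|+|\nabla\chi|^2\leq C(n)\sigma^{-2}$; such $\chi$ is standard under hypothesis (2), see \cite{P} and \cite{SimonTopping2016}. The shell-domination requirement forces $\Lambda\gtrsim (k+1)\log(1/t)+\log a$, and substituting the ansatz into the supersolution inequality yields, after dividing by $u$, a pointwise condition of the form
\[
\frac{k}{t}+\dot\Lambda\,\chi+\Lambda\,(\partial_t-\Delta)\chi-\Lambda^2|\nabla\chi|^2\;\geq\;C(n)\,\frac{a}{t}.
\]
Balancing the terms produces two smallness constraints on $t$: $t\leq c(n)\sigma^2 a^{-1}$, so that the $\sigma^{-2}$ Laplacian gain from $\chi$ absorbs the $a/t$ drift, and $t\leq D\sigma^4 k^{-2}$, so that the large quadratic term $\Lambda^2|\nabla\chi|^2\sim k^2(\log(1/t))^2/\sigma^2$ is dominated. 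The main obstacle is precisely this calibration of $\Lambda(t)$ so that $u$ stays $\leq t^k$ on $B_t(p,1)$ (where $\chi=0$ makes this automatic), dominates $a/t$ on the shell, and satisfies the strict supersolution inequality simultaneously; strictness is arranged by replacing $u$ with $u+\varepsilon$, carrying out the maximum principle, and letting $\varepsilon\downarrow 0$.
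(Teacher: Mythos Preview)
Your overall plan---add a nonnegative multiple of $\mathcal{I}$ to $Rm$, run a tensor maximum principle, and analyze a first failure point using the null-vector condition \eqref{cone}---is exactly the paper's strategy, and since $\mathfrak{C}$ is a cone your additive ansatz $Rm+u\,\mathcal{I}$ is formally equivalent to the paper's $A=\Phi\,Rm+\varphi\,\mathcal{I}$ via $u=\varphi/\Phi$. The bilinear expansion of $Q$ and the bound on the cross term are also correct.

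There is, however, a real gap in your barrier calibration. With $u=t^{k}e^{\Lambda(t)\chi}$ and shell-domination forcing $\Lambda(t)\sim(k+1)\log(1/t)$, the term $\dot\Lambda\,\chi$ in your displayed inequality contributes $-(k{+}1)\chi/t$. In the transition region where $\chi$ is close to $1$ this exactly cancels the leading $k/t$, leaving a net $-1/t$ on the left with no way to dominate $C(n)a/t$ on the right. The failure point is not excluded from this region: from $\widetilde{Rm}(v_0,\bar v_0)=0$ and $|Rm|\le a/t$ one only gets $u(x_0,t_0)\le C(n)a/t_0$, which with your $\Lambda$ corresponds precisely to $\chi$ near $1$. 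Your two smallness constraints balance $\Lambda(\partial_t-\Delta)\chi$ and $\Lambda^2|\nabla\chi|^2$ but do nothing for $\dot\Lambda\chi$.

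The paper sidesteps this by taking $A=\Phi\,Rm+\varphi(t)\,\mathcal{I}$ with $\Phi=\phi(\tilde d_t)$ a spatial cutoff and $\varphi=t^k$ purely temporal. At the null direction one substitutes $Rm_{v\bar v}=-\varphi\,\mathcal{I}_{v\bar v}/\Phi$, and crucially $|Rm|\le a/t$ gives a \emph{lower} bound $\Phi\gtrsim t\varphi/a$ at the failure point. Choosing $\phi$ so that $|\phi''|/\phi+|\phi'|^2/\phi^2\le C\epsilon^{-2}\sigma^{-2}\phi^{-\epsilon}$, the cutoff terms are controlled by $\Phi^{-\epsilon}\le (a/(t\varphi))^\epsilon$, yielding the scalar inequality $\varphi'\le (a/t)\varphi + C\sigma^{-2}\epsilon^{-2}a^\epsilon t^{-\epsilon}\varphi^{1-\epsilon}$, from which the bound $t_1\ge D\sigma^4/k^2$ follows after optimizing $\epsilon\sim 1/k$. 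In your language this is the ansatz $u=t^k/\Phi$ (power-law blow-up of the cutoff, not exponential in a bounded $\chi$), which has no $\dot\Lambda\chi$ term. One further point: since $\varphi(0)=0$, the paper first runs a short bootstrap with \emph{bounded} curvature (their Claim) to ensure the first failure time is strictly positive; your ``$u\mapsto u+\varepsilon$'' gives interior strictness but you still need such an argument to get past $t=0$.
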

\begin{proof}First of all, we will assume $T<1$. The condition that $A\in\mathfrak{C}$ is equivalent to say that $A_{v\bar v}\geq 0$ for all $v\in {S}$ where $A$ is understood to be the complexified operator.

Denote $A=\Phi Rm+ \varphi(t) \mathcal{I}$ where $\varphi(0)>0$ so that $A(0)\in\mathfrak{C}$. Here we denote $\Phi$ to be a generic cutoff function. We will specify our choice of $\varphi$ and $\Phi$ later.

Suppose $A$ fail to be inside $\mathfrak{C}$ for some $t_0>0$, we can then find the largest $t_1>0$ such that $A(t)\in \mathfrak{C}$ for all $t\in [0,t_1]$. At $t=t_1$, there is point $p\in M$, $v\in \mathfrak{so}(n,\mathbb{C})$ with $|v|=1,v\in {S}$ so that $A(v,\bar v)=0$.

Extend $v$ locally using parallel translation with respect to metric $g(t_1)$ and then extend it to spacetime such that $D_t v=0=\Delta v$ at $(p,t_1)$. We are now ready to derive equations regarding to the choice of $\varphi(t)$.

By assumption on $(p,t_1)$, $A_{v\bar v}\geq 0$ locally and hence at $(p,t_1)$
\begin{align*}
0&\geq  (\partial_t -\Delta )A(v,\bar v)\\
&=\Box \Phi \cdot R_{v,\bar v}+2\Phi \cdot Q(Rm)_{v\bar v}-2\langle \nabla\Phi,\nabla R_{v\bar v}\rangle+\varphi' \\
&=-\left(\frac{\Box \Phi}{\Phi}+\frac{2|\nabla \Phi|^2}{\Phi^2} \right)\varphi(t)+\varphi'(t) +2\Phi Q(Rm)_{v\bar v}.
\end{align*}

Here we denote $\heat$ by $\Box$ for notational convenience. By the preservation of cone $\mathfrak{C}$, see \cite{BrendleSchoen2009},
$$Q(A)(v,\bar v)\geq 0.$$
Hence
$$0\leq Q(A)=\Phi^2 Q( Rm)+\varphi \Phi Ric\owedge id -(n-1) \varphi \Phi\cdot  Rm$$
where we have used the fact that at $(p,t_1)$, $A_{v\bar v}=0$ at the last term. Therefore, we have obtained that at $(p,t_1)$,
\begin{equation}\label{inequality regarding to varphi}
\varphi'\leq \left(\frac{\Box \Phi}{\Phi}+2\frac{|\nabla \Phi|^2}{\Phi^2} +|Rm_{v\bar v}|\right)\varphi.
\end{equation}
It reduces back to the method in \K Ricci flow concerning the nonnegativity of $BK$, see \cite{LeeTam2017}, we can use the bootstrapping argument there to deduce the result.

Let $\Phi(x,t)=\phi(d_t(x,p)+c\sqrt{at})$. By Lemma 8.3 in \cite{P}, whenever $d_t(x,p)\geq \sqrt{t}$, it satisfies
$$\heat d_t(x,p)\geq -\beta \sqrt{a} t^{-1/2}$$
in the sense of barrier where $\beta=\beta(n)$. Thus if we choose $c$ sufficiently large, the modified distance function $\tilde d_t(x,p)=d_t(x,p)+c\sqrt{at}$ satisfies
\begin{align}\label{dist-1}
\heat \tilde d_t(x,p)>0
\end{align}
whenever $d_t(x,p)\geq \sqrt{t}$. We may assume it to be smooth when applying maximum principle using the argument in \cite{ST}. Here $\phi$ is some cutoff function identical to $1$ on $[0,1+2\sigma]$, vanishes outside $[0,1+3\sigma]$ so that
$$\heat \Phi \leq -\phi''(\tilde d_t).$$

We first show that the conclusion hold on a slightly larger ball and some time interval where the curvature is bounded by some constant $|Rm|(t) < C_1$. If we allow the time interval to be uncontrolled, we can assume the curvature to be bounded since we are working on a compact subset in $M$. For each $\epsilon > 0$, we choose $\phi$ such that $\sigma \phi^{-1}|\phi''|+\sigma^2\phi^{-2}|\phi'|^2\leq C\e^{-2} \phi^{-\e}$ for some constant $C$. Using
$0=\Phi Rm+\varphi \mathcal{I} \geq -C_1\Phi+\varphi$, the above inequality \ref{inequality regarding to varphi} becomes
$$(\varphi^\e)'\leq C_2,$$
where $C_2$ depends on $C_1$, $\sigma$ and $\e$. Therefore, we can see that if we choose $\varphi= (t+s)^{\alpha}$ with $\alpha<\e^{-1}$, and
\[ 0 < s \leq s_0:= \frac{1}{2}\left(\frac{\e \alpha}{1+C_2(C_1, \sigma, \e)}\right)^\frac{1}{1-\e \alpha},\]
then $t_1 \leq s$ will lead to a contradiction. Hence $\Phi Rm(s) + (2s)^\alpha \mathcal{I}\in\mathfrak{C}$ on the small time interval $[0, s_0]$. In a nut shell, we have shown the following.
\begin{claim}\label{claim}
For any $k\in \mathbb{N}$, there is $\delta>0$ depending on $k$, $\sigma$ and the initial metric such that for all $t\in[0,\delta]$, $\tilde d_t(x,p) \geq 1+2\sigma$,
$$Rm_{g(t)}+(2t)^k\mathcal{I} \in\mathfrak{C}.$$
\end{claim}

Now, we choose $\phi$ so that $\phi=1$ on $[0,1+\sigma]$, vanishes outside $[0,1+2\sigma]$ and satisfies $\sigma \phi^{-1}|\phi''|+\sigma^2\phi^{-2}|\phi'|^2\leq C\e^{-2} \phi^{-\e}$, where $0< \e < \frac{1}{2(1+2a)}$. Let $a < k < \frac{1}{2\e} - 1$ and choose $\varphi(t) = t^k$. Although $\varphi(0)=0$, we can still find a positive $t_1$ as above since Claim \ref{claim}. guarantees that $A(t) \in \mathfrak{C}$ for $t>0$ small enough. We argue in the exact same way as before. The difference is that by the condition $|Rm|(t)\leq \frac{a}{t}$, the inequality \ref{inequality regarding to varphi} becomes
$$\varphi'\leq \frac{a}{t_1}\varphi +\frac{C a^\e}{\e^2 \sigma^2 t_1^\e}\varphi^{1-\e}.$$
By our choice of $\varphi$ this implies
$$t_1 \geq \left(\frac{\e^2 \sigma^2(k-a)}{C a^\e}\right)^\frac{1}{1-k\e-\e}.$$
Note that if we take $\e = \frac{1}{2k+3}$ and $k \geq 2a$, then we have
\[t_1 \geq \frac{D\sigma^4}{k^2},\]
where $D$ is a constant independent of $a$ and $k$.

The conclusion then follows after we shrink the time interval to $[0, \frac{\sigma^2}{C(n)a} \wedge T]$ in order to compare $d_t(x,p)$ and $\tilde d_t(x,p)$.
\end{proof}

By using Theorem \ref{pic}, we can show that the curvature tensor remains in the cone if the curvature of the complete Ricci flow solution satisfies $ad_{g_0}(x,p)^2 t^{-1}$ when $t>0$ for some $a>0$. Before we give a proof, we first recall a lemma which allows us to compare $d_t(x,p)$ and $d_0(x,p)$.

\begin{lma}[The shrinking balls lemma, \cite{SimonTopping2016}]\label{sbl}
Suppose $(M,g(t))$ is a Ricci flow for $t\in [0,T]$. Then there exists a constant $\beta_n\geq 1$ such that the following is true. Suppose $p\in M$ and $B_0(p,r)\subset\subset M$ for some $r>0$, and $Ric_t\leq (n-1)c_0/t$ on $B_0(p,r)$ for each $t\in (0,T]$. Then
$$B_t\left(p,r-\beta\sqrt{c_0 t}\right)\subset B_0(p,r).$$
\end{lma}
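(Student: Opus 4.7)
My plan is to combine Perelman's distance distortion estimate (Lemma 8.3 of \cite{P}) with a continuity/bootstrap argument, which by now is the standard route. The key input is the following: if $\Ric(g(s)) \le (n-1)K$ on $B_s(p, r_0)$, then at any $y$ with $d_s(p,y) \ge r_0$,
\[ \frac{d^+}{ds} d_s(p,y) \ge -C(n)\bigl(K r_0 + r_0^{-1}\bigr) \]
in the barrier sense. Taking $r_0 = \sqrt{s/c_0}$ and $K = c_0/s$ balances the two terms on the right and yields $\tfrac{d^+}{ds} d_s(p,y) \ge -\beta_n \sqrt{c_0/s}$, whose integral over $[0,t]$ is precisely the $\beta_n\sqrt{c_0 t}$ appearing in the statement.

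Fixing $x \in M$ and writing $f(s):=d_s(p,x)$, a successful application of the pointwise inequality above on all of $[0, t]$ would give $d_0(p,x) \le d_t(p,x) + \beta_n \sqrt{c_0 t}$, so any $x \in B_t(p, r - \beta_n \sqrt{c_0 t})$ would automatically lie in $B_0(p, r)$, which is the conclusion. In the regime $d_s(p,y) < \sqrt{s/c_0}$ the barrier argument does not apply, but then $y$ is so close to $p$ that $y \in B_0(p,r)$ can be secured directly by the same distance-distortion estimate applied for a shorter time, which in effect can be absorbed into the final constant $\beta_n$.

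The main obstacle is that the Ricci bound is assumed only on $B_0(p,r)$, whereas to apply Perelman's estimate at time $s$ we need it on $B_s(p, \sqrt{s/c_0})$. This chicken-and-egg issue is handled by a bootstrap. Set
\[ I := \bigl\{ \tau \in [0, t] : B_s(p, r - \beta_n \sqrt{c_0 s}) \subset B_0(p, r)\ \text{for all } s \in [0, \tau] \bigr\}. \]
Then $0 \in I$, and $I$ is closed in $[0, t]$ by continuity of $d_s$ in $s$. Relative openness to the right follows because, for $\tau \in I$, the defining inclusion forces $B_\tau(p, \sqrt{\tau/c_0}) \subset B_0(p, r)$, after enlarging $\beta_n$ so that $\sqrt{\tau/c_0} \le r - \beta_n \sqrt{c_0 \tau}$ on the relevant range. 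Perelman's estimate then applies at $\tau$, and integrating the resulting differential inequality over a slightly larger interval extends the inclusion past $\tau$. Hence $I = [0, t]$.

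The delicate technical point is the very short-time regime, where the upper bound $(n-1)c_0/s$ blows up; it is precisely the balanced choice $r_0 = \sqrt{s/c_0}$ that renders the distortion estimate integrable at $s=0$ and makes the bootstrap close.
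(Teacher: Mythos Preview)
The paper does not supply its own proof of this lemma; it is quoted from \cite{SimonTopping2016} and used as a black box. Your outline is correct and is essentially the argument given there: Perelman's Lemma~8.3 with the balanced radius $r_0=\sqrt{s/c_0}$, integrated in $s$, together with a continuity/bootstrap to transfer the Ricci bound from $B_0(p,r)$ to the time-$s$ balls on which Perelman's estimate is applied.

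One small point worth making explicit in your write-up: for the step ``enlarging $\beta_n$ so that $\sqrt{\tau/c_0}\le r-\beta_n\sqrt{c_0\tau}$'' to produce a constant depending only on $n$, first replace $c_0$ by $\max(c_0,1)$ (the hypothesis $\Ric_t\le (n-1)c_0/t$ only weakens), so that $\sqrt{\tau/c_0}\le\sqrt{c_0\tau}$. Then, if $2\gamma_n$ denotes the constant obtained by integrating Perelman's bound, taking $\beta_n\ge 1+2\gamma_n$ makes the required inequality automatic on the range $\beta_n\sqrt{c_0\tau}<r$ where the conclusion is nontrivial, and the bootstrap closes.
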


Now we are ready to show the preservation of curvature conditions.
\begin{cor}\label{preservation of nonnegativity}
Let $(M,g(t))$ be a complete solution of Ricci flow with
\[|Rm|(x,t) \leq \frac{a d_{g_0}(x,p)^2}{t}\]
for $t\in(0,T]$, where $d_{g_0}(x,p)$ is the distance to a fixed point $p\in M$ with respect to $g(0)$. Suppose $Rm(0) \in \mathfrak{C}$, then $Rm(t) \in \mathfrak{C}$ for $t\in[0,T]$.
\end{cor}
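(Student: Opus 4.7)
The plan is to reduce Corollary \ref{preservation of nonnegativity} to Theorem \ref{pic} by applying it at arbitrary centers $x_0 \in M$ and at spatial scales $R \to \infty$, using Lemma \ref{sbl} to convert the global quadratic curvature bound into the localized $a/t$ hypothesis of the theorem, then iterating in time via a bounded-curvature maximum principle.

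Fix $x_0 \in M$ and set $D_0 := d_{g_0}(x_0, p)$. For any $R \geq 2D_0$, the triangle inequality together with the hypothesis yields $|Rm|(x, t) \leq 4aR^2/t$ on $B_0(x_0, R)$. Applying Lemma \ref{sbl} with Ricci upper bound $(n-1)\cdot 4aR^2/t$ gives $B_t(x_0, R/4) \subset B_0(x_0, R)$ for all $t \leq t_0$, where $t_0 = t_0(a, \beta_n) > 0$ is independent of $R$; hence the same bound $|Rm|(\cdot, t)\le 4aR^2/t$ propagates to $B_t(x_0, R/4)$. Now Theorem \ref{pic} applies at center $x_0$ with $1+4\sigma = R/4$ and ``$a$'' replaced by $4aR^2$, and gives on $B_t(x_0, 1)$, for every $k \in \mathbb{N}$,
\[
Rm_{g(t)} + t^k \mathcal{I} \in \mathfrak{C},
\]
valid for $t \leq T \wedge c(n)\sigma^2(4aR^2)^{-1} \wedge D(n)\sigma^4 k^{-2} \wedge t_0$. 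After a preliminary parabolic rescaling so that $T<1$ (which merely rescales the effective constant $a$), take $R \to \infty$: the factor $c(n)\sigma^2/(4aR^2)$ converges to some positive constant $c'/a$, while $D(n)\sigma^4/k^2 \to \infty$ allows $k = k(R) \to \infty$. Since $t<1$, the perturbation $t^k$ may then be made arbitrarily small, and the closedness of $\mathfrak{C}$ yields $Rm_{g(t)}(x_0) \in \mathfrak{C}$ for every $x_0 \in M$ and every $t \in [0, \tau_1]$, where $\tau_1 := T \wedge c'/a$.

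To extend the conclusion to the full interval $[0, T]$, I would iterate. On $[\tau_1, T]$ the hypothesis $|Rm|(x,t) \leq a d_{g_0}(x,p)^2/t$ gives a locally bounded curvature bound since $t \geq \tau_1 > 0$; combined with $\mathrm{Ric}(g(t)) \geq 0$ on $[0, \tau_1]$ (already established from $Rm \in \mathfrak{C}$, which controls how the flow distorts compact sets), the preservation of $\mathfrak{C}$ further in time follows from the standard Hamilton maximum principle for tensors applied on a compact exhaustion of $M$---essentially a simpler variant of Theorem \ref{pic} without the singular $t^{-1}$ term that necessitates the $\varphi = t^k$ auxiliary function. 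The main obstacle is precisely this extension step: the shifted flow does not satisfy the $|Rm|\leq a/t$ hypothesis of Theorem \ref{pic} verbatim, so the theorem cannot be invoked as a black box, and instead one must appeal to the bounded-curvature analogue; finitely many iterations then suffice to cover $[0, T]$.
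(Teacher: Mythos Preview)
Your reduction to Theorem~\ref{pic} via the shrinking balls lemma and the double limit $R\to\infty$, $k\to\infty$ is essentially the paper's argument; centering at an arbitrary $x_0$ rather than at $p$ is an inessential variation. Where you diverge is in the iteration, and here you make things harder than necessary. You write that ``the shifted flow does not satisfy the $|Rm|\le a/t$ hypothesis of Theorem~\ref{pic} verbatim'' and therefore retreat to a bounded-curvature maximum principle on a compact exhaustion. But the shifted flow \emph{does} satisfy the hypothesis verbatim, with the same constants. The point is that the ball-inclusion $B_t(p,\rho)\subset B_0(p,2\rho)$ (and hence the bound $|Rm|\le 4a\rho^2/t$ on $B_t(p,\rho)$) is proved once and for all for every $t\in(0,T_0]$, not merely for $t\in(0,\tau_1]$. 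So if you restart at any $\tau<T_0$ and set $\tilde g(s)=g(\tau+s)$, then for $s\in(0,T_0-\tau]$ you still have $B_{\tilde g(s)}(p,\rho)=B_{g(\tau+s)}(p,\rho)\subset B_0(p,2\rho)$, whence $|Rm_{\tilde g}|\le 4a\rho^2/(\tau+s)\le 4a\rho^2/s$ there. Theorem~\ref{pic} then applies to $\tilde g$ with exactly the same ``$a$'' as before, yielding the same uniform step $\sim c(n)/a$, and finitely many restarts cover $[0,T_0]$. This is what the paper means by ``repeat the argument.''

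Your proposed alternative---a local Hamilton tensor maximum principle under a locally bounded curvature assumption---is in principle viable, but as written it is vague: a maximum principle ``on a compact exhaustion'' needs boundary control, and the step size you would extract depends on the local curvature bound and hence on the location, so the phrase ``finitely many iterations then suffice'' requires justification you have not supplied. The paper's route avoids this entirely by never leaving the $a/t$ framework.
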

\begin{proof}
Let $$T_{max}=\sup\{ s\in [0,T]: \;\;Rm(g(t))\in\mathfrak{C},\;\;\forall \;t\leq s\}.$$
We show that $T_{max}=T$. Clearly, $T_{max}\geq 0$.

For each $k$ and $\rho> 0$, we claim that $|Rm|(x,t) \leq \frac{4a \rho^2}{t}$ for $x\in B_t(p,\rho)$ and $0<t < T_0$, where $T_0=T\wedge \frac{1}{4a\beta}$ . To see this, let $\bar{r}$ be the least radius such that $B_t(p, \rho) \subset B_0(p, \bar{r})$ for $0\leq t \leq T_0$, by the shinking ball lemma \ref{sbl} we have $\bar{r}\leq 2\rho$, hence the claim.

Now we can apply Theorem \ref{pic} to show
\[Rm + t^k \mathcal{I} \in \mathfrak{C}\]
on $B_t(p, \rho)$, $t< T_1 = T_0\wedge c(n) \wedge D \rho^4 k^{-2}$. In particular, $T_1$ is independent of $\rho$.  Let $\rho \to \infty$, then we can redefine $T_1$ to be independent of $k$. This shows $T_{max} > 0$. If $T_1< T$ we can repeat the argument to show $T_{max}=T$.
\end{proof}

We would like to point out that in Theorem \ref{pic}, the non-negativity at $t=0$ is crucial if we compare the evolution equation with the corresponding ODE solution. Now,  we try to extend the result which allows the lowest eigenvalue to be negative initially. However, we need to assume a stronger curvature assumption as well as the cone. We will consider convex cone with the following condition: $\exists \lambda \geq 0$ such that for all $Rm\in \partial\mathfrak{C}$, $v\in {S}$ with $Rm(v,\bar v)=0$, we have
\begin{align}\label{cone-2}
(Ric\owedge I -\frac{1}{2} scal \cdot \mathcal{I})_{v\bar v}\leq \lambda \sqrt{Q(Rm)_{v\bar v}}.
\end{align}
In particular, the cone of  $PIC$ also satisfies \eqref{cone-2}. For details, we refer to \cite{CabezasBamlerWilking2017}.

\begin{thm}\label{pic-2}
Let $\mathfrak{C}({S})$ be a convex cone satisfying \eqref{cone-2}.  Suppose $(M,g(t))$ is a Ricci flow for $t\in[0,T]$, and $p\in M$ such that $B_t(p,2)\subset\subset M$ for all $t\in [0,T]$. Assume further that there is $a<\frac{1}{3\sqrt{n}}$, $k>0$ such that
\begin{enumerate}
\item $Rm_{g(0)}+k\mathcal{I}\in \mathfrak{C}$ on $B_0(p,2)$,
\item $\displaystyle |Rm(g(t))|\leq \frac{a}{t}$ on $B_t(p,2)$ for all $t\in (0,T]$.
\end{enumerate}
Then there exists $L(n,k,a), \tilde T(n,a,k)>0,\;p(n,a)\in[0,1/2)$ such that on $B_t(p,1)$, $k\in \mathbb{N}$,
$$Rm+\left[ Lt(scal_{g(t)}+3nk) +t^p+k \right]\mathcal{I}\in \mathfrak{C}.$$
for all $t\leq  T\wedge \tilde T$.
Here $\mathcal{I}$ denotes the constant curvature operator of scalar curvature $n(n-1)$.
\end{thm}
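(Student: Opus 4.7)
The plan is to run the same contact-point/maximum-principle argument as in Theorem \ref{pic}, but replace the scalar correction $\varphi(t)\mathcal{I}$ by a term that couples to the evolving scalar curvature in the manner of \cite{CabezasBamlerWilking2017}, so as to exploit the weaker condition \eqref{cone-2}. Concretely, I would test membership in $\mathfrak{C}$ for
\[
  A \;:=\; \Phi\,Rm_{g(t)} \;+\; \bigl[\,Lt\bigl(\Phi\,scal_{g(t)}+3nk\bigr) + t^{p} + k\,\bigr]\,\mathcal{I},
\]
where $\Phi = \phi(d_{t}(x,p)+c\sqrt{at})$ is the modified cutoff from Theorem \ref{pic} (identically $1$ on $B_{t}(p,1)$, vanishing outside $B_{t}(p,2)$), and $L,\tilde T>0$, $p\in[0,1/2)$ are constants fixed at the end. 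At $t=0$ one has $A = Rm_{g(0)}+k\mathcal{I}\in\mathfrak{C}$ on $B_{0}(p,2)$ by hypothesis~(1).

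\textbf{Contact-point inequality.} Suppose for contradiction that $A$ first leaves $\mathfrak{C}$ at some $t_{1}\in(0,\tilde T]$, at a point $x_{0}$ and in a direction $v\in S$, $|v|=1$, with $A(v,\bar v)=0$. Parallel-transporting $v$ and extending in time as in the proof of Theorem \ref{pic}, Uhlenbeck's trick gives at $(x_{0},t_{1})$
\begin{align*}
  0 \;\geq\; (D_{t}-\Delta)\,A(v,\bar v) \;=\;&\, (\Box\Phi)\,Rm_{v\bar v} - 2\langle\nabla\Phi,\nabla Rm_{v\bar v}\rangle + 2\Phi\,Q(Rm)_{v\bar v} \\
  &+ L(\Phi\,scal + 3nk) + 2Lt\Phi\,|\Ric|^{2} \\
  &+ Lt\bigl((\Box\Phi)\,scal - 2\langle\nabla\Phi,\nabla scal\rangle\bigr) + p\,t^{p-1},
\end{align*}
where $\Box = \tfrac{\partial}{\partial t}-\Delta$. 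The contact relation $A(v,\bar v)=0$ converts the $Rm_{v\bar v}$ term into an explicit expression in $\varphi(t):=Lt(\Phi\,scal+3nk)+t^{p}+k$, via $\Phi\,Rm_{v\bar v}=-\varphi(t_{1})$.

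\textbf{Applying \eqref{cone-2} and closing the argument.} The critical step is to absorb the potentially negative quantity $(\Ric\owedge I - \tfrac12 scal\,\mathcal{I})_{v\bar v}$ that appears in the expansion of $Q(A)_{v\bar v}$ (cf.\ the display just before \eqref{inequality regarding to varphi}). By \eqref{cone-2}, $(\Ric\owedge I - \tfrac12 scal\,\mathcal{I})_{v\bar v}\leq \lambda\sqrt{Q(Rm)_{v\bar v}}$, and a weighted Cauchy inequality lets one absorb its contribution into $\tfrac12\Phi^{2}Q(Rm)_{v\bar v}$ at the cost of a term of size $\tfrac{\lambda^{2}}{2}\varphi(t_{1})^{2}$. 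The cutoff terms $\Box\Phi$ and $|\nabla\Phi|^{2}/\Phi$ are tamed exactly as in Theorem \ref{pic} by choosing $\phi$ with $|\phi''|+|\phi'|^{2}\leq C_{\e}\phi^{1-\e}$; together with the pointwise bound $|scal|\leq n(n-1)a/t$ and the smallness hypothesis $a<\tfrac{1}{3\sqrt n}$, this yields at $t_{1}$ an inequality of the form
\[
  p\,t_{1}^{\,p-1} + L(\Phi\,scal + 3nk) \;\leq\; \Lambda_{1}(n,a,k)\,\varphi(t_{1}) \;+\; \Lambda_{2}(n,a,k)\,t_{1}^{-\e}\varphi(t_{1})^{1-\e}.
\]
Fixing $p=p(n,a)\in[0,1/2)$ so that $1-p-\e>0$, then $L=L(n,k,a)$ large enough that $3nkL$ dominates the right-hand side at $t_{1}=0^{+}$, and finally $\tilde T=\tilde T(n,a,k)$ small enough, produces the required contradiction.

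\textbf{Main obstacle.} The genuinely new difficulty, compared to Theorem \ref{pic}, is that \eqref{cone-2} does \emph{not} render $\mathfrak{C}$ invariant under the bare ODE $\dot A = Q(A)$: the scalar-coupled correction $Lt(scal+3nk)\mathcal{I}$ must actively drive $A$ back into $\mathfrak{C}$ against the drift produced by $(\Ric\owedge I -\tfrac12 scal\,\mathcal{I})_{v\bar v}$, all while allowing $scal$ to be as negative as $-n(n-1)a/t$. Balancing $L,\,p,\,\tilde T$ against the two small parameters $\lambda$ and $a$ so that the Cauchy absorption leaves a strictly positive margin is the delicate part of the argument.
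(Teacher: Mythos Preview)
Your outline has the right architecture but the decisive step is misapplied. Condition~\eqref{cone-2} is a statement about tensors lying on $\partial\mathfrak{C}$ in a null direction; at the contact point it is $A$, not $Rm$, that satisfies $A\in\partial\mathfrak{C}$ and $A(v,\bar v)=0$. Hence \eqref{cone-2} only gives
\[
  \bigl(\Ric(A)\owedge I - \tfrac12\,scal(A)\,\mathcal{I}\bigr)_{v\bar v}\;\le\;\lambda\sqrt{Q(A)_{v\bar v}},
\]
not the bound on $(\Ric(Rm)\owedge I - \tfrac12\,scal(Rm)\,\mathcal{I})_{v\bar v}$ that you invoke. Your Cauchy absorption into $\tfrac12\Phi^{2}Q(Rm)_{v\bar v}$ is therefore unjustified on two counts: the inequality you cite is unavailable, and $Q(Rm)_{v\bar v}$ itself has no sign (only $Q(A)_{v\bar v}\ge0$ is known). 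What the paper actually does is expand $Q(A)_{v\bar v}$ and $\Ric(A),\,scal(A)$ in terms of $Rm$ and the correction $\Phi k + Lt(scal+3nk)+\varphi$, then solve the resulting quadratic inequality in $\sqrt{Q(A)_{v\bar v}}$ to extract a \emph{lower} bound on $2\Phi\,Q(Rm)_{v\bar v}$; this produces an error term of the form $-C_n\Phi^{-1}\bigl(\Phi k + Lt(scal+3nk)+\varphi\bigr)^{2}$, which is precisely the ``main obstacle''. Controlling it uses the contact identity $\Phi\ge a^{-1}t\bigl(Lt(scal+3nk)+\varphi\bigr)$ coming from $|Rm|\le a/t$, and this is where the smallness $a<\tfrac{1}{3\sqrt n}$ genuinely enters.

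There are also two missing ingredients that your sketch would need. First, the test tensor involves $scal_{g(t)}$ without a cutoff, so the cross term $\langle\nabla scal,\nabla\Phi\rangle$ cannot be eliminated by $\nabla A_{v\bar v}=0$ alone; the paper uses Shi's local gradient estimate $|\nabla Rm|\le C_{n,a}t^{-3/2}$ to bound it directly. Second, one needs $scal_{g(t)}\ge -2nk$ on the relevant ball (from the local scalar lower bound in \cite{SimonTopping2016}) so that $Lt(scal+3nk)\ge nkLt>0$; this both keeps $A$ inside $\mathfrak{C}$ where $\Phi=0$ and feeds into the lower bound for $\Phi$ above. Finally, the paper's cutoff is $\Phi=e^{-200mt}\phi^{m}$ with $\phi''\ge -100\phi$, engineered so that $\Box\Phi\le -m\Phi$; this manufactures a good term $+m\bigl(Lt(scal+3nk)+\varphi\bigr)$ that the $\phi^{1-\e}$-type cutoff of Theorem~\ref{pic} does not provide.
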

\begin{proof}
By a result in \cite[Lemma 8.1]{SimonTopping2016} and Shi's estimate (e.g. see \cite[Theorem 1.4]{CaoChenZhu2008} by shifting the time), we may assume the scalar curvature $scal_{g(t)} \geq -2nk$ and $|\nabla Rm_{g(t)}| \leq C_{n,a}t^{-3/2}$ on $B_t(p,\frac{3}{2}),\;t\in (0,T]\cap [0,\tilde T]$ by shrinking $\tilde T$. Let $\phi$ be a cutoff function on $[0,+\infty)$ such that $\phi=1$ on $[0,1]$, vanishes outside $[0,\frac{3}{2}]$ and satisfies
\begin{align}
|\phi'| \leq 100, \;\;\phi''\geq -100\phi.
\end{align}
Define $\Phi(x,t)=e^{-200mt}\phi^m\left( d_t(x,p)+c_n\sqrt{at}\right)$ where $m\in \mathbb{N}$ is an  positive integers to be specified later. Then, as in \eqref{dist-1}, we may assume that it satisfies
\begin{align}
\heat \Phi \leq -m\Phi
\end{align}
in the sense of barrier. We may assume it to be smooth when applying maximum principle as pointed out in the proof of Theorem \ref{pic}. Furthermore, we will assume $\tilde T\leq \frac{1}{1000m}$.

We consider
$$A=\Phi (Rm+k\mathcal{I})+\left[Lt(scal_{g(t)}+ 3nk)+\varphi(t)\right] \mathcal{I}$$
where $L$ is a large positive number and $\varphi(t)$ is a time function with $\varphi(0)\geq 0$ and $\varphi(t)\leq 1$. We will specify the choices later.

Clearly, $A(t)\in \mathfrak{C}$ on $B_t(p,\frac{3}{2}-c_n\sqrt{at})$ when $t$ is small. Without loss of generality we can assume $A(0)$ to be inside the interior of the cone at $t=0$ wherever $\Phi(x,0) >0$ ( if not, we can replace $k$ by $k+\delta$ and let $\delta \to 0$ eventually). Let $t_1>0$ be the first time such that $A(t)\in \mathfrak{C}$ on $B_t(p,\frac{3}{2}-c_n\sqrt{at})$ for all $t\in [0,t_1]$ and at $t=t_1$, there is a point $x_0\in \overline{B_t(p,\frac{3}{2}-c_n\sqrt{at})}$, $v\in \mathfrak{so}(n,\mathbb{C})$ with $|v|=1$, $v\in {S}$ so that $A(v,\bar v)=0$. We may assume that $t_1<1$, otherwise, it is done. Extend $v$ locally using parallel translation with respect to metric $g(t_1)$ and then extend it to spacetime such that $D_t v=0=\Delta v$ at $(p,t_1)$. Then at $(x_0,t_1)$, we have
\begin{equation}\label{equ--1}
\begin{split}
0&\geq \heat A(v,\bar v)\\
&=\Box \Phi \cdot (Rm + k\mathcal{I})_{v\bar v} +\Phi \Box R_{v\bar v}-2\langle \nabla R_{v\bar v} ,\nabla\Phi\rangle\\
&\quad +\varphi'+Lt|Ric|^2+L(scal_{g(t)}+3nk)\\
&\geq m\left[Lt(scal_{g(t)}+3nk)+\varphi \right]+2\Phi \cdot Q(Rm)_{v\bar v}+2\left(Lt\frac{\langle \nabla scal_{g(t)},\nabla \Phi\rangle}{\Phi}+R_{v\bar v}\frac{|\nabla\Phi|^2}{\Phi} \right)\\
&\quad+ \varphi' +Lt|Ric|^2 +L(scal_{g(t)}+3nk)\\
&\geq  m\left[Lt(scal_{g(t)}+3nk)+\varphi \right]+2\Phi \cdot Q(Rm)_{v\bar v}-\frac{m^2e^{400t}}{\Phi^\frac{2}{m}}\left[Lt(scal_{g(t)}+3nk)+\varphi \right]\\
&\quad- \frac{C_{n,a}mL}{\Phi^\frac{1}{m}t^\frac{1}{2}}-C_nm^2k+ \varphi' +Lt|Ric|^2 +L(scal_{g(t)}+3nk)\\
\end{split}
\end{equation}
Here, we have used $A_{v\bar v}=0$, $\tilde T<\frac{1}{1000m}$ and Shi's type estimate as stated above. On the other hand, by direct computation, we have
\begin{equation}\label{null-1}
\begin{split}
Q(A)_{v\bar v}&=\Phi^2 Q(Rm)_{v\bar v}+\Phi\left( \Phi k+\left[Lt(scal_{g(t)}+3nk)+\varphi \right]\right)(Ric\owedge I)_{v\bar v}\\
&\quad +(n-1)\left( \Phi k+\left[Lt(scal_{g(t)}+3nk)+\varphi \right]\right)^2\mathcal{I}_{v\bar v}.
\end{split}
\end{equation}

By \cite[Proposition 2.2]{CabezasBamlerWilking2017}, there is $\lambda>0$ such that the curvature type tensor $A$ satisfies
\begin{equation}\label{null-2}
\begin{split}
-\lambda\sqrt{Q(A)_{v\bar v}}&\leq \frac{1}{2}scal(A)-\left(Ric(A)\owedge I\right)_{v\bar v}\\
&=\frac{\Phi}{2}scal_{g(t)}-\Phi (Ric\owedge I)_{v\bar v}+\frac{1}{2}(n-1)(n-4) \left[ \Phi k+Lt(scal_{g(t)}+3nk)+\varphi\right]
\end{split}
\end{equation}

By combining \eqref{null-1} and \eqref{null-2}, we have
\begin{equation}
\begin{split}
&\quad Q(A)_{v\bar v}-\lambda\left( \Phi k+\left[Lt(scal_{g(t)}+3nk)+\varphi \right]\right) \sqrt{Q(A)_{v\bar v}} \\
&\leq \Phi^2 Q(Rm)_{v\bar v}+(n-1)\left( \Phi k+\left[Lt(scal_{g(t)}+3nk)+\varphi \right]\right)^2\\
&\quad +\frac{\Phi}{2}\left( \Phi k+\left[Lt(scal_{g(t)}+3nk)+\varphi \right]\right)scal_{g(t)} \\
&\quad +\frac{1}{2}(n-1)(n-4) \left( \Phi k+\left[Lt(scal_{g(t)}+3nk)+\varphi \right]\right)^2.
\end{split}
\end{equation}

And hence,
\begin{equation}\label{Null-3}
\begin{split}
2\Phi Q(Rm)_{v\bar v} &\geq -\Phi^{-1}\left( \frac{\lambda^2}{2}+(n-1)(n-2)\right)\left( \Phi k+\left[Lt(scal_{g(t)}+3nk)+\varphi \right]\right)^2\\
&\quad - \left( \Phi k+\left[Lt(scal_{g(t)}+3nk)+\varphi \right]\right) scal_{g(t)}
\end{split}
\end{equation}

Combines with  \eqref{equ--1} together, we get
\begin{equation}
\begin{split}
0&\geq m\left[Lt(scal_{g(t)}+2nk)+\varphi \right]-\frac{m^2e^{400t}}{\Phi^\frac{2}{m}}\left[Lt(scal_{g(t)}+3nk)+\varphi \right]\\
&\quad- \frac{C_{n,a}mL}{\Phi^\frac{1}{m}t^\frac{1}{2}}-C_nm^2k+ \varphi' +Lt|Ric|^2 +L(scal_{g(t)}+3nk)\\
&\quad - \left( \Phi k+\left[Lt(scal_{g(t)}+3nk)+\varphi \right]\right) scal_{g(t)}\\
&\quad -C_n\Phi ^{-1}\left( \Phi k+\left[Lt(scal_{g(t)}+3nk)+\varphi \right]\right)^2.
\end{split}
\end{equation}

On the other hand, we can use the fact that $A_{v\bar v}=0$ and the curvature assumption to deduce that
\begin{align}\label{zeroset}
\Phi \geq a^{-1}t\left[ Lt(scal_{g(t)}+3nk)+\varphi\right]\geq nLka^{-1}t^2 .
\end{align}

Hence, the second and the third term can be controlled by
\begin{equation}
\begin{split}
&\quad \frac{m^2e^{400t}}{\Phi^\frac{2}{m}}\left[Lt(scal_{g(t)}+3nk)+\varphi \right]+ \frac{C_{n,a}mL}{\Phi^\frac{1}{m}t^\frac{1}{2}}\\
&\leq \frac{m^2a^\frac{2}{m}}{t^\frac{2}{m}}\left[ Lt(scal_{g(t)}+3nk)+\varphi\right]^{1-\frac{2}{m}}+\frac{C_{n,a}a^\frac{1}{m}mL^{1-\frac{1}{m}}}{t^{\frac{1}{2}+\frac{2}{m}}}\\
&\leq \frac{C_0}{t^{\frac{1}{2}+\frac{2}{m}}}
\end{split}
\end{equation}
for some $C_0=C_0(n,m,L,k,a)>0$. The main obstacle is the last term. Write $Q=\left[Lt(scal_{g(t)}+3nk)+\varphi \right]$, then by using \eqref{zeroset}
\begin{equation}
\begin{split}
\Phi^{-1}(\Phi k+Q)^2
&\leq \frac{(1+\e)Q^2}{\Phi }+C_\e k^2\\
&\leq \frac{(1+\e)a}{t}\varphi +(1+\e)aL(scal_{g(t)}+3nk)+C_\e k^2.
\end{split}
\end{equation}

In conclusion, we have shown
\begin{equation}
\begin{split}
0&\geq m\left[Lt(scal_{g(t)}+2nk)+\varphi \right]-\frac{C_0}{t^{\frac{1}{2}+\frac{2}{m}}}\\
&\quad-C_{n,\e}m^2k+ \varphi' +Lt|Ric|^2 \\
&\quad +L(scal_{g(t)}+3nk)\left[ 1-(1+\e)a-t(scal_{g(t)}+3nk)- L^{-1}k-L^{-1}\varphi \right] \\
&\quad -\frac{(1+\e)a}{t}\varphi .
\end{split}
\end{equation}

Now if $a<\frac{1}{3\sqrt{n}}$, we may choose $\e>0$ such that $1>2(1+\e)a$. By taking $\varphi(t)=t^q,\;m,\;L$ so that  $a(1+\e)<q<\frac{1}{2}-\frac{2}{m}$, $1>(2+\e)a+L^{-1}(k+1)$. Then we see that $t_1 \geq \tilde T(n,a,k)$.

By shrinking $\tilde T$ further, we conclude that if $t\in [0,T]\cap [0,\tilde T]$, $x\in B_t(p,1)$, then
$$Rm+(k+Lt(scal_{g(t)}+3nk)+t^q) \mathcal{I} \in \mathfrak{C}.$$
\end{proof}


\section{Curvature estimates and pseudolocality}
When $\mathfrak{C}({S})$ is the cone corresponding to PIC1 or PIC2 condition, the situation is particularly interesting. In fact, it was proved in \cite{CabezasWilking2015} that any complete nonflat ancient solution of Ricci flow with
bounded curvature $Rm\in \mathfrak{C}_{PIC2}$ has
\[\lim_{r\to \infty} \frac{Vol B(r)}{r^n} = 0.\]
Moreover, in \cite{CabezasBamlerWilking2017}, they showed that in fact the any ancient solution with bounded curvature with $Rm\in \mathfrak{C}_{PIC1}$ must be in $\mathfrak{C}_{PIC2}$. In the following, we will focus on the cone $\mathfrak{C}_{PIC1}$. From now on, we will denote $\mathfrak{C}=\mathfrak{C}_{PIC1}$. Using the result on ancient solution with weakly PIC2, we can follow the argument in \cite{SimonTopping2016} to deduce the following estimates.

\begin{lma}\label{curvestimate}
For any $n,v_0,K>0$, there exists $\bar T(n,v_0,K)$, $C_0(n,v_0,K)>0$ such that the following holds: Suppose $(M^{n},g(t))$ is a Ricci flow for $t\in [0,T]$ and $p\in M$ such that $B_t(p,r)\subset\subset M$ for each $t\in[0,T]$. If
$$V_{g_0}(p,r)\geq v_0r^{n}\quad\text{and}\;\;Rm(g(t)) +Kr^{-2}\in\mathfrak{C}\;\;\text{on}\;\;B_t(p,r),\;t\in [0,T]$$
Then for all $t\in (0,T]\cap (0,\bar T\cdot r^2]$,
$$ |Rm|(x,t)\leq \frac{C_0}{t}\quad\text{on}\;\;B_t(p, r/8).$$
Moreover the injectivity radius satisfies
$$inj_{g(t)}(x)\geq \sqrt{C_0^{-1}t}.$$
\end{lma}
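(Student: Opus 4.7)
The plan is to follow the blow-up strategy of \cite{SimonTopping2016}, substituting the recent classification of ancient weakly PIC1 solutions from \cite{CabezasBamlerWilking2017,CabezasWilking2015} for Hamilton's three-dimensional result. First I would parabolically rescale $g \mapsto r^{-2} g$ so as to reduce to the case $r = 1$; the hypothesis then reads $\Rm(g(t)) + K\mathcal{I} \in \mathfrak{C}$ on $B_t(p,1)$ for every $t \in [0,T]$, together with $V_{g_0}(p,1) \geq v_0$, and the goal is a uniform curvature estimate $|\Rm|(x,t) \leq C_0/t$ on $B_t(p,1/8)$ for $t \in (0,\bar T]$.

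Suppose this fails. Then there exist Ricci flows $(M_i, g_i(t), p_i)$ satisfying the hypotheses on $[0, T_i]$ together with times $t_i \to 0$ and points $y_i \in B_{t_i}(p_i, 1/8)$ with $t_i |\Rm|(y_i, t_i) \to \infty$. A Perelman-type space-time point-picking lemma produces nearby points $(x_i, s_i)$ with $Q_i := |\Rm|(x_i, s_i) \to \infty$, $s_i Q_i \to \infty$, and $|\Rm| \leq 4 Q_i$ on a controlled parabolic neighborhood of $(x_i, s_i)$ in $g_i$. Parabolically rescale by $Q_i$ about $(x_i, s_i)$ to obtain $\tilde g_i(\tau) = Q_i g_i(s_i + \tau/Q_i)$ defined for $\tau \in [-s_i Q_i, 0]$; the rescaled weakly PIC1 defect $K Q_i^{-1}$ tends to zero.

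The step I expect to be the hardest is establishing uniform $\kappa$-non-collapsing at $(x_i, 0)$ for $\tilde g_i$. The hypothesis $\Rm + K\mathcal{I} \in \mathfrak{C}$ forces $\Ric \geq -(n-1)K$ on $B_t(p_i, 1)$ for all $t \in [0,T_i]$, which together with Lemma \ref{sbl} controls distance distortion between $g_i(0)$ and $g_i(s_i)$; combined with the initial volume bound $V_{g_0}(p_i,1) \geq v_0$, this transports to a macroscopic volume lower bound on $g_i(s_i)$ over a ball around $p_i$, and Bishop--Gromov at time $s_i$ descends this bound to small scales near $x_i$. After rescaling by $Q_i$ this becomes uniform unit-scale non-collapsing of $\tilde g_i(0)$ at $x_i$. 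Hamilton's compactness theorem then extracts a smooth pointed limit $(M_\infty, \tilde g_\infty(\tau), x_\infty)$, which is a complete non-flat ancient Ricci flow with bounded curvature satisfying $\widetilde{\Rm}_\infty \in \mathfrak{C}_{PIC1}$. Moreover, applying the same volume transport at every scale shows that $\tilde g_\infty(0)$ has maximal volume growth at $x_\infty$. By \cite{CabezasBamlerWilking2017} the limit in fact satisfies $\widetilde{\Rm}_\infty \in \mathfrak{C}_{PIC2}$, and then \cite{CabezasWilking2015} forces $\lim_{R\to\infty} \mathrm{Vol}\,B_{\tilde g_\infty(0)}(x_\infty, R)/R^n = 0$, contradicting the maximal volume growth. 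This contradiction establishes the curvature estimate.

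Finally, the injectivity radius bound $\mathrm{inj}_{g(t)}(x) \geq \sqrt{C_0^{-1} t}$ is a routine consequence. Once $|\Rm| \leq C_0/t$ is known on $B_t(p, 1/8)$, the Ricci lower bound $\Ric \geq -(n-1) K r^{-2}$ together with the initial volume bound and Lemma \ref{sbl} yields a uniform lower bound of the form $\mathrm{Vol}\,B_t(x, \sqrt{t}) \geq v_1 t^{n/2}$, and the standard Cheeger--Gromov--Taylor injectivity radius estimate at scale $\sqrt{t}$ delivers the claim after adjusting $C_0$.
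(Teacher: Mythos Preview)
Your overall strategy is exactly the paper's: adapt the blow-up argument of \cite[Lemma~2.1]{SimonTopping2016}, replacing the three-dimensional input by the fact (from \cite{CabezasBamlerWilking2017,CabezasWilking2015}) that a nonflat complete ancient solution with bounded curvature and $\Rm\in\mathfrak{C}_{PIC1}$ cannot have positive asymptotic volume ratio. The injectivity-radius deduction via Cheeger--Gromov--Taylor plus a volume lower bound at scale $\sqrt{t}$ is also precisely what the paper does (citing \cite[Lemma~2.3]{SimonTopping2016}).

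There is, however, a real gap in the step you yourself flag as hardest. You invoke Lemma~\ref{sbl} together with the Ricci \emph{lower} bound $\Ric\geq -(n-1)K$ to control distance distortion and transport the volume lower bound from $t=0$ to $t=s_i$. But Lemma~\ref{sbl} requires a Ricci \emph{upper} bound $\Ric\leq (n-1)c_0/t$; the lower bound alone only yields $g(t)\leq e^{2(n-1)Kt}g(0)$, which gives $B_0\subset B_t$ and $dV_t\leq C\,dV_0$---neither the containment nor the volume inequality in the direction you need. Nothing in your setup prevents volume from collapsing between time $0$ and $s_i$. In Simon--Topping's argument the missing upper bound comes from the structure of the contradiction: one takes $t_i$ to be the \emph{first} time at which the desired estimate $|\Rm|\leq c_0/t$ fails on a slightly larger ball, so that the bound genuinely holds on $(0,t_i)$. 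This supplies exactly the hypothesis of Lemma~\ref{sbl} and of \cite[Lemma~2.3]{SimonTopping2016}, which then transport the initial volume lower bound down to scale $\sqrt{t_i}$ at time $t_i$; after that your point-picking, rescaling, and limit extraction go through as written. Once the curvature estimate is established, your argument for the injectivity radius is correct, since at that stage you \emph{do} have $|\Rm|\leq C_0/t$ and hence the Ricci upper bound needed for Lemma~\ref{sbl}.
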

\begin{proof}
The proof is identical to the proof of Lemma 2.1 in \cite{SimonTopping2016} except that we use Lemma 4.2 in \cite{CabezasBamlerWilking2017} to draw a contradiction. The injectivity radius lower bound follows from a result in \cite{CheegerGromovTaylor1982} together with Lemma 2.3 in \cite{SimonTopping2016}.
\end{proof}

Using Lemma \ref{curvestimate} and Theorem \ref{pic}, the method in \cite{SimonTopping2016} can be carried over to give the following pseudolocality result.

\begin{thm}\label{pic1pse}Let $(M^n,g(t))$ be a complete solution of Ricci flow on  $M\times  [0,T]$. Let $p\in M$ and $r>0$.
  Suppose
  \begin{enumerate}
    \item [(i)] $Rm_{g_0}\in \mathfrak{C}$ on $B_{g_0}(p,8r)$;
    \item [(ii)] $V_0(x,ar)\ge v_0(ar)^{n}>0$ for all $x\in B_{g_0}(p,4r)$ and $a\leq 2$;
    \item [(iii)] $\sup_{M\times(\tau, T]}|Rm(g(t))|<\infty$ for all $\tau>0$.
  \end{enumerate}
     Then there is $C_0(n,v_0),\;\tilde T(n,v_0)>0$ such that
$$t|\Rm(g(t))|\le C_0$$  on $B_t(p,r)$ for all $t\in (0,T]\cap (0,\tilde Tr^2]$. 
\end{thm}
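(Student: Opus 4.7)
The plan is to adapt the continuity/bootstrap strategy of \cite{SimonTopping2016}, using Lemma \ref{curvestimate} (curvature decay from approximate PIC1 plus volume lower bound) and Theorem \ref{pic} (preservation of the PIC1 cone up to a $t^k\mathcal{I}$ error) as the two principal inputs. After parabolic rescaling I may assume $r=1$. I define
$$S \;=\; \sup\Bigl\{\, s\in (0,T\wedge \tilde T\,] \;:\; t\,|Rm(g(t))|\leq C_0 \text{ on } B_t(p,1) \text{ for all } t\in (0,s] \,\Bigr\},$$
where $C_0$ and $\tilde T$, depending only on $n$ and $v_0$, are to be fixed at the end. Assumption (iii) provides bounded curvature on each slab $[\tau,T]$, which both makes $S$ well defined with $S>0$ (via a direct application of Lemma \ref{curvestimate} on a small initial sub-interval, since PIC1 together with the almost-Euclidean volume lower bound holds essentially unchanged for small $t$) and ensures that the defining condition is closed on the left.

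The heart of the argument is to rule out $S<\tilde T$. Suppose for contradiction this happens. On $(0,S]$ one has $|Rm|\leq C_0/t$ on $B_t(p,1)$; by a point-centred version of the same argument applied at centres $x\in B_0(p,4)$ using the multi-scale volume hypothesis (ii), one upgrades the bound to $|Rm|\leq C_0'/t$ on $B_t(p,1+\sigma)$ for some $\sigma(n,v_0)>0$. Feeding this into Theorem \ref{pic}, with the initial PIC1 supplied by (i), yields, for any preassigned $k$,
$$Rm(g(t)) + t^k \mathcal{I} \;\in\; \mathfrak{C} \qquad\text{on } B_t(p,1),\; t\in (0,S],$$
provided $\tilde T$ is chosen small relative to $\sigma$ and $k$.

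Next I propagate the volume bound to the new initial time $t=S$. The approximate PIC1 above forces $\Ric \geq -(n-1)t^k g$, so Bishop-Gromov comparison combined with the shrinking balls lemma \ref{sbl} (whose hypotheses are satisfied thanks to $|Rm|\leq C_0/t$) transfers assumption (ii) into a lower bound $V_{g(S)}(x,\rho)\geq \tfrac{1}{2}v_0\rho^n$ for all $x$ in a slightly smaller ball and $\rho$ of order $1$, once $\tilde T$ is small enough. Viewing $g(S)$ as a new initial metric and applying Lemma \ref{curvestimate} from time $S$, with the near-PIC1 condition $Rm(g(S))+C S^k\mathcal{I}\in\mathfrak{C}$ playing the role of the $Kr^{-2}$ correction, produces a bound $|Rm(g(t))|\leq C_0/(t-S)$ on a smaller ball for $t\in (S,S+\delta]$ with $\delta=\delta(n,v_0)>0$. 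A continuity argument on $[S/2,S+\delta]$ (where curvature is uniformly bounded, by (iii)) then upgrades this to the desired bound $t|Rm|\leq C_0$ slightly past $S$, contradicting maximality.

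The main obstacle is closing the bootstrap loop: Theorem \ref{pic} requires a curvature bound of type $a/t$ in order to propagate PIC1, while Lemma \ref{curvestimate} requires (near) PIC1 on a time-dependent ball in order to produce such a curvature bound. The constants $\sigma$, $k$, $\tilde T$, and $C_0$ must therefore be chosen in a self-consistent hierarchy so that the $t^k$ error in PIC1, the $\sigma$-shrinkage of the ball, and the volume distortion induced by $|Rm|\leq C_0/t$ are all controlled uniformly as the argument is iterated across $[0,S]$.
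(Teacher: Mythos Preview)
Your proposal is correct and follows essentially the same approach as the paper: the authors explicitly state that the proof is identical to that of \cite[Theorem 1.1]{SimonTopping2016}, with \cite[Lemma 2.1]{SimonTopping2016} replaced by Lemma \ref{curvestimate} and the lower-bound persistence replaced by Theorem \ref{pic}, which is precisely the pair of inputs you organize your bootstrap around. Your identification of the main obstacle---closing the loop between the $a/t$ curvature bound needed for Theorem \ref{pic} and the time-interval PIC1 hypothesis needed for Lemma \ref{curvestimate}---is exactly the point, and in the Simon--Topping scheme this is resolved by the continuity/point-picking structure you invoke, using (iii) only at positive times as the paper notes.
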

\begin{proof}
The proof is identical to the proof of Theorem 1.1 in \cite{SimonTopping2016}. We here only point out the main difference. Under our curvature assumption, the curvature estimate \cite[Lemma 2.1]{SimonTopping2016} can be replaced by Lemma \ref{curvestimate} here. The persistence of lower bound have been obtained in Theorem \ref{pic} if it is initially nonnegative. Moreover, in this proof the boundedness of curvature is only used when we apply pseudolocality at some positive time $t_0>0$, therefore we only need the curvature to be bounded for positive time.

\end{proof}

We next consider the case when the curvature of the initial metric $g_0$, $Rm(g_0)+k\mathcal{I} \in\mathfrak{C}_{PIC1}$ on some open set. Under a stronger assumption on the volume of geodesic balls, we have the following pseudolocality result even though initially $Rm(g_0)$ is not inside the cone.
\begin{thm}\label{pse-almost}For any $\e>0$ and $n\in\mathbb{N}$, there is $\delta(n,\e),\tilde T(n,\e)>0$ such that the following is true. Let $(M^n,g(t))$ be a complete solution of Ricci flow on  $M\times  [0,T]$, $p\in M$. Suppose the following is true.
  \begin{enumerate}
    \item [(i)] $Rm_{g_0}+k\mathcal{I}\in \mathfrak{C}$ on $B_{g_0}(p,2)$ for some $B>0$;
    \item [(ii)] $V_0(x,r)\ge (1-\delta)\omega_n r^{n}>0$ for all $x\in B_{g_0}(p,2)$ and $r\leq 2$;
    \item [(iii)] $\sup_{M\times(\tau, T]}|Rm(g(t))|<\infty$ for all $\tau>0$.
  \end{enumerate}
     Then for any $x\in B_t(p,\frac{1}{8})$, $t\in (0,T]\cap (0,\tilde T]$,
$$t|\Rm(g(t))|\le \e.$$
\end{thm}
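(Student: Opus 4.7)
The plan is a contradiction/blow-up argument in the spirit of the proofs of Theorem \ref{pic1pse} and Perelman's pseudolocality. Suppose the conclusion fails for some fixed $\epsilon>0$: then there exist sequences $\delta_j\downarrow 0$, $\tilde T_j\downarrow 0$ and complete Ricci flows $(M_j,g_j(t))$ on $[0,T_j]$ with basepoints $p_j$ satisfying (i)--(iii) for $\delta=\delta_j$, but admitting $x_j\in B_{t_j}(p_j,1/8)$, $t_j\in(0,T_j]\cap(0,\tilde T_j]$ with $t_j|Rm(g_j(t_j))|(x_j)>\epsilon$. A standard point-picking argument (as in Perelman and in \cite{SimonTopping2016}) would replace $(x_j,t_j)$ by $(\hat x_j,\hat t_j)$ with $Q_j:=|Rm(g_j(\hat t_j))|(\hat x_j)\to\infty$, $\hat t_jQ_j\to\infty$, and $|Rm|\leq 2Q_j$ on a parabolic neighborhood whose rescaled size grows unboundedly. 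I would then parabolically rescale to $\tilde g_j(s):=Q_jg_j(\hat t_j+s/Q_j)$ on $s\in[-\hat t_jQ_j,0]$, which satisfies $|Rm(\tilde g_j(0))|(\hat x_j)=1$.

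Next, Theorem \ref{pic-2} applied to the unrescaled flow on $B_{g_j(0)}(p_j,2)$ gives, on a smaller ball and for small $t$,
\[Rm_{g_j(t)}+\bigl[Lt(scal_{g_j(t)}+3nk)+t^q+k\bigr]\mathcal{I}\in\mathfrak{C}.\]
Under parabolic rescaling by $Q_j$, this additive tensor is $O(Q_j^{-1})$ on regions where $|Rm(\tilde g_j)|$ is bounded, so the defect from $\mathfrak{C}$ vanishes in the limit. The volume hypothesis (ii) is scale invariant and strengthens to an almost-Euclidean volume bound on every fixed scale in the rescaled picture as $Q_j\to\infty$. Feeding these two inputs into Lemma \ref{curvestimate} yields curvature bounds of the form $|Rm(\tilde g_j(s))|\leq C(R)/(s-s_j)$ on $B_{\tilde g_j(s)}(\hat x_j,R)$ for every $R$, together with injectivity radius lower bounds via the Cheeger--Gromov--Taylor estimate used in its proof.

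Hamilton's compactness theorem then produces a smooth pointed limit $(M_\infty,g_\infty(s),x_\infty)$ that is a complete ancient Ricci flow on $s\in(-\infty,0]$ with bounded curvature on compact spacetime subsets, with $|Rm_{g_\infty}|(x_\infty,0)=1$, and with $Rm_{g_\infty}(s)\in\mathfrak{C}_{PIC1}$ for all $s$. Pushing the Bishop--Gromov volume comparison to the limit (using $\delta_j\to 0$) shows that $g_\infty(0)$ has Euclidean volume growth. By \cite{CabezasBamlerWilking2017}, any bounded-curvature ancient Ricci flow with $Rm\in\mathfrak{C}_{PIC1}$ automatically lies in $\mathfrak{C}_{PIC2}$, and by \cite{CabezasWilking2015} any nonflat such ancient solution has strictly sub-Euclidean volume growth. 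Together these force $g_\infty$ to be flat, contradicting $|Rm_{g_\infty}|(x_\infty,0)=1$.

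The main obstacle is executing the rescaling cleanly in the presence of the inhomogeneous error term from Theorem \ref{pic-2}: one must verify that the factor $Lt(scal_{g_j(t)}+3nk)$, which depends on the evolving scalar curvature, really does scale out in the blow-up so that the limit lies in $\mathfrak{C}_{PIC1}$, and that sufficient uniform control (in $j$) is available on a backward parabolic neighborhood so that both the almost-PIC1 condition and the almost-Euclidean volume hypothesis propagate through the blow-up rather than being lost after rescaling.
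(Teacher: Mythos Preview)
Your proposal has a genuine circularity at the step where you invoke Theorem \ref{pic-2}. That theorem carries the hypothesis $|Rm(g(t))|\le a/t$ on $B_t(p,2)$ with $a<\frac{1}{3\sqrt{n}}$; without it, the local persistence estimate for the almost-PIC1 condition is simply unavailable. In your setup you have not established any such $a/t$ bound on the unrescaled flow---indeed, a bound of the form $t|Rm|\le \e$ is exactly the conclusion you are after---so you cannot legitimately feed Theorem \ref{pic-2} into the argument. The subsequent steps (showing the defect scales out, applying Lemma \ref{curvestimate}, extracting an ancient PIC1 limit) all rest on this unjustified application. Point-picking alone does not help here: it gives $|Rm|\le CQ_j$ on a parabolic neighborhood of $(\hat x_j,\hat t_j)$, not an $a/t$ bound from $t=0$, and there is no reason for $\hat t_j Q_j\to\infty$ (one only gets $\hat t_j Q_j\ge\e$).

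The paper breaks this circularity by running the Simon--Topping scheme rather than a direct blow-up. One fixes the threshold $c_0=\frac{1}{3\sqrt{n}}$ and argues at the \emph{first} time the bound $|Rm|\le c_0/t$ fails (``case 2'' of \cite[Lemma 5.1]{SimonTopping2016}). Up to that first failure time the bound $|Rm|\le c_0/t$ holds on a larger ball by construction, so Theorem \ref{pic-2} \emph{does} apply (its smallness hypothesis on $a$ is precisely $c_0<\frac{1}{3\sqrt{n}}$), and the almost-PIC1 condition persists. The paper then feeds this into Lemma \ref{almost-curvestimate}---not Lemma \ref{curvestimate} as you wrote---whose point is that under almost-Euclidean volume one gets $|Rm|\le a/t$ with $a$ \emph{arbitrarily small}, in particular $a<c_0$, contradicting the assumed failure. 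Lemma \ref{curvestimate} would only give a fixed $C_0(n,v_0,K)/t$ and would not close the loop. If you want to salvage a blow-up approach, you would first need an independent source for a small $a/t$ bound (e.g.\ a Tian--Wang or Perelman-type pseudolocality, after scaling $k$ small), and then your limit argument essentially reproduces the proof of Lemma \ref{almost-curvestimate} rather than the theorem itself.
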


We first show that if the geodesic ball is arbitrarily close to Euclidean, then the conclusion in Lemma \ref{curvestimate} can be strengthen to a arbitrary small upper bound.
\begin{lma}\label{almost-curvestimate}
For $\e,n>0$, there is $\tilde T,\delta,K>0$ such that if $(M,g(t))$ is a solution to the Ricci flow on $M\times [0,T]$, $p\in M$ satisfying
\begin{enumerate}
\item $B_t(p,1)\subset\subset M$ for $t\in [0,T]$;
\item $Vol_{g_0}(B_{g_0}(p,1)) \geq (1-\delta)\omega_n$;
\item $Rm_{g(t)}+K \mathcal{I} \in \mathfrak{C}_{PIC1}$ on $B_t(p,1)$, $t\in [0,T]$
\end{enumerate}
Then on $B_t(p,\frac{1}{2})$, $t\in [0,T]\cap [0,\tilde T]$, $$|Rm(x,t)|\leq \frac{\e}{t}.$$
\end{lma}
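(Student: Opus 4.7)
I would argue by contradiction, combining Lemma \ref{curvestimate}, a parabolic rescaling, and Perelman's pseudolocality theorem \cite{P}.

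Suppose the lemma fails for some $\e_0>0$. Extract sequences $\delta_i,K_i\to 0$, Ricci flows $(M_i,g_i(t))$ on $[0,T_i]$ satisfying hypotheses (1)--(3) with $\delta=\delta_i$, $K=K_i$, together with points $x_i\in B_{t_i}(p_i,1/2)$ and $t_i\in(0,T_i]$, $t_i\to 0$, for which $t_i|Rm_{g_i(t_i)}|(x_i)>\e_0$. The first step is to obtain the uniform a priori bound $|Rm_{g_i(t)}|(y)\leq C_0(n)/t$ on $B_t(p_i,1/2)\times(0,\bar T(n)]$ by applying Lemma \ref{curvestimate} at a family of fixed-radius balls covering a dense net of $B_0(p_i,3/4)$. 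The required almost-Euclidean volume at each such center follows from $V_{g_i(0)}(p_i,1)\geq(1-\delta_i)\omega_n$ combined with Bishop--Gromov volume comparison under $\Ric\geq-(n-1)K_i$ (a consequence of $Rm+K_i\mathcal{I}\in\mathfrak{C}_{PIC1}$) and Colding's volume almost-rigidity. In particular $s_i:=t_i|Rm_{g_i(t_i)}|(x_i)\in[\e_0,C_0]$ is bounded while $Q_i:=|Rm_{g_i(t_i)}|(x_i)\to\infty$, and after passing to a subsequence $s_i\to s_\infty$.

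Next I would parabolically rescale by setting $\tilde g_i(s):=Q_ig_i(t_i+s/Q_i)$ based at $x_i$. The rescaled flow lives on $s\in[-s_i,(T_i-t_i)Q_i]$, satisfies the normalization $|\widetilde{Rm}_i|(x_i,0)=1$, obeys the rescaled a priori bound $|\widetilde{Rm}_i|(\cdot,s)\leq C_0/(s_i+s)$, and has PIC1 defect $K_i/Q_i\to 0$ on rescaled balls of radius $\sqrt{Q_i}\to\infty$. At the rescaled initial time $s=-s_i$ (corresponding to $t=0$ of the original flow) the metric has volume $V_{\tilde g_i(-s_i)}(x_i,R)\geq(1-\delta_i')\omega_n R^n$ for $R$ up to $R_i\to\infty$ with $\delta_i'\to 0$, scalar curvature bounded below by $-C(n)K_i/Q_i\to 0$, and---combining the Ricci lower bound with the almost-Euclidean volume---isoperimetric profile arbitrarily close to the Euclidean one on any fixed-radius ball as $i\to\infty$.

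The final step, which I view as the main obstacle, is to close the contradiction. My plan is to apply Perelman's pseudolocality theorem \cite{P} directly to the rescaled flow $\tilde g_i$ with initial time $s=-s_i$: for any prescribed $\alpha>0$ there exist $\delta_0(\alpha,n),\bar\e(\alpha,n)>0$ such that if the initial ball $B_{\tilde g_i(-s_i)}(x_i,r_0)$ has isoperimetric constant within $\delta_0$ of the Euclidean value and scalar curvature $\geq -r_0^{-2}$, then $|\widetilde{Rm}_i|(y,s)\leq\alpha/(s+s_i)$ on $B_s(x_i,\bar\e r_0)\times(-s_i,\,-s_i+(\bar\e r_0)^2]$. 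Choosing $\alpha:=\e_0/2$ and fixing $r_0:=\sqrt{C_0}/\bar\e(\alpha,n)$---a scale depending only on $\e_0$ and $n$---the pseudolocality hypotheses are satisfied for $i$ sufficiently large by the previous step, and the reach condition $s_i\leq(\bar\e r_0)^2=C_0$ is automatic. Evaluating at $s=0$ then yields $1=|\widetilde{Rm}_i|(x_i,0)\leq\alpha/s_i\leq(\e_0/2)/\e_0=1/2$, the desired contradiction; careful handling of the completeness assumption in Perelman's theorem through cut-off arguments at the rescaled scale $\sqrt{Q_i}$ is the key technical point.
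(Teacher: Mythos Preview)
Your approach is genuinely different from the paper's, and the difference matters. The paper argues by contradiction, picks the \emph{first} time the bound fails, uses \cite[Corollary~6.2]{Simon2012} to propagate the almost-Euclidean volume to that time, then applies the point-picking lemma \cite[Lemma~5.1]{SimonTopping2016} to find $(\bar x_i,\bar t_i)$ with $|Rm|\le 4Q_i$ on a parabolic neighbourhood of scale $Q_i^{-1/2}$. After rescaling by $Q_i$ one has a \emph{local} flow with uniformly bounded curvature and uniform injectivity radius (from \eqref{vol} and \cite{CheegerGromovTaylor1982}), so local Hamilton compactness produces a nonflat limit with $\Ric\ge 0$ and volume ratio identically $\omega_n$ on a ball; Bishop--Gromov rigidity then gives the contradiction. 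No global hypothesis on the flow is ever invoked.

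Your route instead relies on Perelman's pseudolocality, and this is where the argument breaks. Perelman's theorem (and the Tian--Wang variant) requires the flow to be \emph{complete with bounded curvature} on each positive time slice; the proof integrates the $\mathcal W$-functional over all of $M$ and uses the conjugate heat kernel, and there is no known ``cut-off argument at scale $\sqrt{Q_i}$'' that removes this hypothesis. Your rescaled flows $\tilde g_i$ live only on balls of radius $\sim\sqrt{Q_i}$, are incomplete, and the curvature blows up like $C_0/(s+s_i)$ as $s\downarrow -s_i$, so neither hypothesis holds. You flag this as ``the key technical point'' but give no mechanism to overcome it; in fact, obtaining pseudolocality-type conclusions \emph{without} global completeness and bounded curvature is exactly what results such as Theorem~\ref{pse-almost} of this paper are about, so invoking Perelman here is circular. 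The paper's point-picking/compactness route sidesteps this entirely because local Hamilton compactness needs only local curvature and injectivity-radius bounds, both of which the point-picking step supplies.

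A smaller issue: in your first step you apply Lemma~\ref{curvestimate} at centres $x$ lying in the \emph{time-$0$} ball $B_0(p_i,3/4)$, but the hypotheses of that lemma are posed on time-$t$ balls $B_t(x,r)$, and you have no a~priori distance comparison yet. This is repairable (apply Lemma~\ref{curvestimate} once at $p_i$ with $r=1$ and work in $B_t(p_i,1/8)$, adjusting the target radius in the statement), but the pseudolocality gap above is not.
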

\begin{proof}
Suppose not, there is $\e_0>0$, $T_i,K_i,\delta_i\rightarrow 0$ and a sequence of Ricci flow $(M_i,g_i(t))$ corresponding to $T_i,K_i,\delta_i$ defined on $ [0,T_i]$ and there is a $p_i\in M$ satisfying
\begin{enumerate}
\item $Vol_{g_i(0)}(p_i,1)\geq (1-\delta_i) \omega_n$;
\item $Rm_{g_i(t)}+K_i \mathcal{I} \in \mathfrak{C}_{PIC1}$ on $B_{g_i(0)}(p_i,1)$.
\end{enumerate}
But there is $t_i\in [0,T_i]$ such that for all $t\in [0,t_i)$, $x\in B_{g_i(t)}(p_i,\frac{1}{2})$,
$$|Rm(g_i(t))|< \frac{\e}{t}.$$
And $|Rm(g_i(t))|(x_i,t_i)=\e t_i^{-1}$ for some $x_i\in \overline{B_{g_i(t_i))}(p_i,\frac{1}{2})}$.

By Corollary 6.2 in \cite{Simon2012}, we may choose $T_i$ small depending on $n$ and $\delta_i$ such that for all $t\in [0,t_i)\subset [0,T_i]$,
$$Vol_{g_i(t)}(B_{g_i(t)}(p_i,1))\geq (1-2\delta_i)\omega_n.$$





By Lemma 5.1 in \cite{SimonTopping2016}, for sufficiently large $i$, we can find $\bar t_i\in (0,t_i]$, $\bar x_i \in B_{g_i(\bar t_i)}(p_i,\frac{1}{2}+\frac{1}{2}\beta_n\sqrt{\e_0 \bar t_i})$ such that
$$|Rm|_{g_i(t)}(x)\leq 4|Rm|_{g_i(\bar t_i)}(\bar x_i)=4Q_i$$
whenever $d_{g_i(\bar t_i)}(x,\bar x_i)< 8^{-1}\beta_n \e_0 Q_i^{-1/2}$ and $\bar t_i-8^{-1} \e_0 Q_i^{-1}\leq t\leq \bar t_i$ where $Q_i\geq \e_0 \bar t_i^{-1}$. By volume comparison, for all $0<r< \frac{1}{2}$, for sufficiently large $i$,
\begin{align}\label{vol}
\frac{Vol_{g_i(\bar t_i)}(B_{g_i(\bar t_i)}(\bar x_i,r ))}{\omega_n r^n}\geq (1-3\delta_i)
\end{align}
if we choose $K_i$ small enough depending only on $\delta_i$ and $n$.

Consider $\tilde g_i(t)=Q_ig_i(\bar t_i+Q_i^{-1} t)$ $t\in [-\frac{\e_0}{8},0]$.
The rescaled Ricci flow satisfies
$$Rm(\tilde g_i(t))+K_i Q_i^{-1}\in \mathfrak{C}\quad\text{and}\quad|Rm|_{\tilde g_i(t)}\leq 4 \quad\text{on}\;\;B_{\tilde g(0)}(\bar x_i,\frac{1}{8}\beta \e_0),\,t\in [-\frac{\e_0}{8},0]$$
and $|Rm|_{\tilde g_k(0)}(\bar x_k)=1$. Moreover, by result in \cite{CheegerGromovTaylor1982}, we have uniform injectivity radius lower bound on $\tilde g_i(0)$ at $\bar x_i$ due to (\ref{vol}).

By local Hamilton compactness \cite[Theorem 3.16]{RicciFlowBook2}, we have a limiting solution $g_\infty(t)$ defined on $B_\infty\times [-\frac{\e_0}{8},0]$ which is non-flat, has Euclidean volume growth and has non-negative Ricci curvature. But this is impossible by volume comparison. This completes the proof.
\end{proof}

\begin{proof}[Proof of Theorem \ref{pse-almost}]
The proof is identical to that in Theorem \ref{pic1pse} as well as \cite[Theorem 1.1]{SimonTopping2016}. Here we only point out the necessary modifications. By scaling, we may assume $k$ to be small so that we can apply Lemma \ref{almost-curvestimate}. 
As in \cite[Page 27]{SimonTopping2016}, we only need to show that the case 2 in \cite[Lemma 5.1]{SimonTopping2016} is impossible with some choice of $c_0$. Now we can replace \cite[Lemma 6.1]{SimonTopping2016} by means of  \cite[Theorem 6.2]{SimonTopping2016} and Lemma \ref{almost-curvestimate} and conclude a curvature bound $at^{-1}$ where $a$ can be arbitrarily small depending on how the geodesic ball is close to a Euclidean one. Moreover, we have local persistence of curvature lower bound if we choose $c_0=\frac{1}{3\sqrt{n}}$. Then if the geodesic ball is too close to the Euclidean ball, we have $a<c_0$ yielding a contradiction at the centre of ball as in \cite[Page 28]{SimonTopping2016}.
\end{proof}

Suppose on a geodesic ball the sectional curvature is bounded, and the volume has a lower bound, then under a certain smaller scale geodesic balls have almost Euclidean volume (see for example \cite{Lu2010}), hence Theorem \ref{pse-almost} can be applied. Then Theorem 3.1 of \cite{chen2009} yields the following

\begin{cor}\label{local-doubling-estimate}
There exists $\sigma(n,v_0)$ and $\Gamma(n,v_0)$ such that for any complete smooth Ricci flow solution with $\sup_{M \times [\tau, T]}|Rm| < \infty$, $\forall\tau >0$, suppose $|Rm|(x, 0) \leq 1$ for all $x \in B_{g_0}(p, 1)$, and $Vol_{g_0}B_{g_0}(p, 1) \geq v_0$, then we have $|Rm|(x,t)\leq \Gamma$ for all $x\in B_{g(t)}(p, \sigma)$ and $t\in[0, \sigma^2 \wedge T]$.
\end{cor}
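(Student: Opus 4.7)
The plan is to follow the three-step roadmap sketched in the paragraph preceding the statement: first, transfer the initial data into the hypotheses of Theorem \ref{pse-almost}; second, apply that pseudolocality to obtain a decay estimate of the form $|\Rm(g(t))|\leq \e/t$ on a small ball; third, invoke Theorem 3.1 of \cite{chen2009} to upgrade this decay estimate into a genuine uniform curvature bound on a slightly smaller ball for a definite time.

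First, I will use $|\Rm(g_0)|\leq 1$ on $B_{g_0}(p,1)$ together with the volume lower bound $v_0$ to produce an almost-Euclidean volume lower bound on a smaller geodesic ball. This is a standard Bishop-Gromov argument under bounded curvature: for every $\delta>0$ there exists $\rho=\rho(n,v_0,\delta)\in(0,1/2)$ such that $V_0(B_{g_0}(x,r))\geq (1-\delta)\omega_n r^n$ for all $x\in B_{g_0}(p,2\rho)$ and $r\leq 2\rho$, as worked out in \cite{Lu2010}. The two-sided initial curvature bound also gives $\Rm(g_0)+C_n\mathcal{I}\in\mathfrak{C}$ on $B_{g_0}(p,1)$ for a dimensional constant $C_n$.

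Next, I will rescale by $\tilde g(t)=\rho^{-2}g(\rho^2 t)$ so that $B_{g_0}(p,2\rho)$ becomes the unit-scale ball $B_{\tilde g_0}(p,2)$. The volume defect $\delta$ is preserved by the rescaling, while the PIC1 defect scales like $\rho^2$; shrinking $\rho$ further if necessary arranges both $\delta\leq \delta(n,\e)$ from Theorem \ref{pse-almost} and the smallness of the rescaled PIC1 defect required in its proof. Theorem \ref{pse-almost} then yields $t|\Rm(\tilde g(t))|\leq \e$ on $B_{\tilde g(t)}(p,1/8)$ for $t\leq \tilde T(n,\e)$, and undoing the scaling produces $|\Rm(g(t))|\leq \e/t$ on $B_{g(t)}(p,\rho/8)$ for $t\in(0,\rho^2\tilde T\wedge T]$. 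The global assumption $\sup_{M\times[\tau,T]}|\Rm|<\infty$ for each $\tau>0$ is precisely what is needed so that hypothesis (iii) of Theorem \ref{pse-almost} is inherited after rescaling.

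Finally, I will apply Theorem 3.1 of \cite{chen2009}, which takes a decay estimate $|\Rm(g(t))|\leq \e/t$ on a geodesic ball, together with the initial bound $|\Rm(g_0)|\leq 1$, and produces a uniform bound $|\Rm(g(t))|\leq \Gamma$ on a slightly smaller ball for a definite time $\sigma^2$, provided $\e$ is below a dimensional threshold. Choosing $\e$ accordingly fixes the remaining parameters and gives $\sigma(n,v_0)$ and $\Gamma(n,v_0)$. The main delicate point will be the cascade of parameter choices: $\e$ is fixed by Chen's theorem in Step 3, which then fixes $\delta$ via Theorem \ref{pse-almost}, and finally $\rho$ must be chosen small enough to simultaneously satisfy the volume rigidity from Step 1 and the required smallness of the rescaled PIC1 defect of order $\rho^2$. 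Verifying that a single $\rho=\rho(n,v_0)$ can accommodate both constraints is the only genuine obstacle; everything else is a direct application of results already established in this paper and in \cite{chen2009,Lu2010}.
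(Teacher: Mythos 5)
Your proposal is correct and is essentially a detailed unpacking of the paper's own (very terse) argument, which consists of the two sentences immediately preceding the corollary: bounded initial curvature plus a volume lower bound yields almost Euclidean volume at a smaller scale (\cite{Lu2010}), whence Theorem \ref{pse-almost} applies, and then Chen's strong uniqueness / local curvature estimate \cite[Theorem 3.1]{chen2009} upgrades the $\e/t$ decay to a uniform bound. You have filled in exactly the details the paper leaves implicit — the dimensional constant $C_n$ making $\Rm(g_0)+C_n\mathcal{I}\in\mathfrak{C}$, the parabolic rescaling by $\rho^{-2}$ so that the PIC1 defect shrinks to $\rho^2 C_n$ and the ball becomes unit-scale, the scale-invariance of $t|\Rm|$ and of the volume ratio, and the cascade $\e\rightsquigarrow\delta\rightsquigarrow\rho$ in which both constraints on $\rho$ are monotone and hence compatible. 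No gap, same route.
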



\section{Existence of Ricci flow}
 In this section, we will show how a short-time solution of Ricci flow can be constructed using local control Theorem \ref{pic}, Lemma \ref{curvestimate} and the method of \cite{Hochard2016} and \cite{SimonTopping2016}. For instance, if the initial metric is weakly PIC1 and is non-collapsed, then there is a short time solution of Ricci flow starting from such a metric.  The strategy here actually works as long as the curvature cone $\mathfrak{C}({S})$ satisfies the followings.
\begin{enumerate}
\item $Q(Rm)_{v\bar v}\geq 0$ for any $Rm\in \partial\mathfrak{C}$ and $v\in {S}$ with $Rm(v,\bar v)=0$;
\item Any nonflat ancient solution with bounded curvature with $Rm\in\mathfrak{C}$ has
\[\lim_{r\to \infty} \frac{Vol B(r)}{r^n} = 0;\]
\item $Ric(Rm)\geq 0$ for any $Rm\in \mathfrak{C}.$
\end{enumerate}

In particular, it is well-known that $\mathfrak{C}_{PIC1}, \mathfrak{C}_{PIC2}, \mathfrak{C}_{Rm}$ satisfy the above. From now on we let $\mathfrak{C}=\mathfrak{C}_{PIC1}$.  We would like to point out that Lai \cite{Lai2018} proved a stronger short-time existence result for Ricci flow on complete noncollapsed manifolds where $Rm(g)$ is almost weakly  PIC1.

\begin{thm}\label{RF}
  Let $(M,g_0)$ be a complete Riemannian manifold. Assume that
  $$V_{g_0}(x,1)\geq v_0 \;\;\text{for all}\;\; x\in M$$
  and $Rm_{g_0}$ is weakly $PIC1$. Then there is a complete solution of Ricci flow with $g(0)=g_0$ and $C_1(n,v_0)>0$ which satisfies
  $$\sup_M|Rm_{g(t)}|\leq \frac{C_1}{t}\;\quad \forall \;t\in (0,T(n,v_0)].$$
  Moreover, $Rm_{g(t)}\in\mathfrak{C}$.
\end{thm}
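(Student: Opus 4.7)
The plan is to follow the pyramid/exhaustion scheme of Hochard \cite{Hochard2016} and Simon--Topping \cite{SimonTopping2016}: approximate $g_0$ by a sequence $g_0^{(k)}$ of complete metrics of bounded curvature that agree with $g_0$ on arbitrarily large compact regions, flow each $g_0^{(k)}$ by Shi's theorem, derive uniform curvature and injectivity radius estimates on compact subsets independent of $k$, and extract a subsequential limit via Hamilton's compactness theorem.

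For the approximations, fix an exhaustion $M=\bigcup_k \Omega_k$ by relatively compact open sets and use the Hochard cut-and-graft construction (\cite{Hochard2016}; see also \cite[Section 4]{SimonTopping2016}) to produce complete metrics $g_0^{(k)}$ on $M$ satisfying $g_0^{(k)} \equiv g_0$ on $\Omega_k$, $\sup_M |Rm(g_0^{(k)})| < \infty$, and $Rm(g_0^{(k)}) + \kappa_k \mathcal{I} \in \mathfrak{C}$ for some $\kappa_k \to 0$; by localizing the graft region deep enough we may assume $\kappa_k < \frac{1}{3\sqrt n}$ so that Theorem \ref{pic-2} applies, while the volume hypothesis $V_{g_0}(x,1) \geq v_0$ descends to $g_0^{(k)}$ on the bulk of $\Omega_k$. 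Shi's short-time existence then gives smooth complete Ricci flows $g^{(k)}(t)$, $t \in [0, T_k)$, of bounded curvature on each $[0, T_k - \delta]$.

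The next step is the uniform a priori estimate. For any $p \in M$ and $k$ large enough that $B_{g_0}(p,1) \Subset \Omega_k$, Lemma \ref{curvestimate} applied with $r = 1$, combined with the local PIC1 lower bound from Theorem \ref{pic-2}, yields
\begin{equation*}
|Rm(g^{(k)}(t))| \leq \frac{C_0(n,v_0)}{t}, \quad \mathrm{inj}_{g^{(k)}(t)}(p) \geq \sqrt{t/C_0} \quad \text{on } B_{g^{(k)}(t)}(p,\tfrac{1}{8}),
\end{equation*}
for $t \in (0, T_k \wedge \bar T(n,v_0)]$. Theorem \ref{pic-2} further gives $Rm(g^{(k)}(t)) + (\kappa_k + L t\,\mathrm{scal} + t^q)\mathcal{I} \in \mathfrak{C}$ on the same ball. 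Since $p$ was arbitrary, these estimates hold uniformly on any prescribed compact subset of $M$ for all $k$ sufficiently large, and the shrinking balls lemma (Lemma \ref{sbl}) confines the $g^{(k)}(t)$-balls to fixed $g_0$-balls over this time range.

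Finally, on any compact $K \Subset M$ and time interval $[\tau, T]$ with $\tau>0$, Shi's higher-order interior estimates combined with the $C_0/t$ bound give uniform $C^\infty$ control of $g^{(k)}(t)$; Hamilton's compactness theorem then produces, via a diagonal argument over $\tau \to 0$ and $K \nearrow M$, a smooth complete Ricci flow $g(t)$ on $M \times (0, T(n,v_0)]$ with $\sup_M |Rm(g(t))| \leq C_1/t$. Letting $\kappa_k \to 0$ and $t^q \to 0$ yields $Rm(g(t)) \in \mathfrak{C}$ (alternatively, once existence is known, Corollary \ref{preservation of nonnegativity} applies directly to $g(t)$). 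Since $g_0^{(k)} \to g_0$ locally in $C^\infty$, the initial condition $g(0)=g_0$ is recovered, and the nonnegative Ricci curvature ensures $g(t)$ remains complete. The principal obstacle is the construction of $g_0^{(k)}$ with bounded curvature, agreement with $g_0$ on large balls, and only a marginal PIC1 deficit --- together with controlling that deficit as $t\to 0$; this is precisely what Hochard's modification and the refined preservation statement of Theorem \ref{pic-2} are tailored to handle.
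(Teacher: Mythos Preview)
Your outline invokes the Hochard/Simon--Topping pyramid scheme by name but does not actually carry it out, and the omission is fatal. Shi's theorem applied to the approximation $g_0^{(k)}$ produces a flow only on $[0,T_k)$ with $T_k$ governed by $\sup_M|Rm(g_0^{(k)})|$, and this quantity blows up with $k$ since $g_0$ itself has unbounded curvature. Your uniform estimates from Lemma~\ref{curvestimate} and Theorem~\ref{pic-2} live on $(0,T_k\wedge\bar T]$; nothing in the sketch prevents $T_k\to 0$, in which case the compactness limit is empty. The point of the pyramid scheme is not ``approximate globally once, run Shi, estimate, pass to a limit'': it is an \emph{iteration} in which, at the end of each short Shi interval, the improved bounds $|Rm|\le C_0/t_0$ and $\mathrm{inj}\ge\sqrt{t_0/C_0}$ on a slightly smaller ball (these depend only on $n,v_0$, not on the step) let one redo the conformal completion with \emph{uniform} parameters and extend the flow by a fixed multiplicative factor $(1+\mu)^2$ in time, at the cost of shrinking the domain by $L\sqrt{t_k}$. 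Geometric growth in time against a summable spatial loss is exactly what manufactures the universal $T(n,v_0)$, and this is what the paper's proof does.

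There is also a circularity in your bootstrap. Lemma~\ref{curvestimate} requires $Rm(g(t))+K\mathcal I\in\mathfrak C$ \emph{along the flow}; you propose to get this from Theorem~\ref{pic-2}, but Theorem~\ref{pic-2} already presupposes $|Rm|\le a/t$ with the stringent bound $a<\tfrac{1}{3\sqrt n}$, which is nowhere verified (Shi's doubling estimate gives $|Rm|\le a/t$ only with a fixed dimensional $a$, and only on the tiny interval $[0,c_n/K_k]$). The paper breaks the loop differently: on the unmodified region one has $Rm(g_0)\in\mathfrak C$ exactly, so Theorem~\ref{pic} (which allows any $a\ge 3$) applies and feeds Lemma~\ref{curvestimate}, upgrading the constant to the universal $C_0$; this $C_0$ then seeds the next conformal modification and Shi step, and the iteration proceeds.
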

\begin{proof}
Let $R>>1$ be a fixed large number. Choose $\rho>0$ small enough such that
\begin{enumerate}
  \item $|Rm(g_0)|\leq \rho^{-2}$ on $B_{g_0}(p,R+1)$;
  \item $inj_{g_0}(x)\geq \rho$ for all $x\in B_{g_0}(p,R+1)$;
  \item $B_{g_0}(x,\rho)\subset B_{g_0}(p,R+1)$ for all $x\in B_{g_0}(p,R)$.
\end{enumerate}

Let $U=B_{g_0}(p,R+\rho)$, choose a conformal factor which is constant $1$ on $B_{g_0}(p,R)$ and blows up at $\partial U$ to turn $U$ into a complete manifold with bounded curvature, denoted as $\tilde{U}$. By Shi's solution of Ricci flow, there is a complete solution of Ricci flow on $X\times [0,t_0]$ where $X$ is the connected component of $\tilde U$ containing $B_{g_0}(p,R)$. Let $a(n,v)$ and $L(n,v)$ be some constants to be fixed later. By choosing $t_0 \leq 1$ small enough, we have a local solution of the Ricci flow with
$$|Rm(x,t)|\leq \frac{a}{t} \quad \text{on} \quad B_0(p,R)\times (0,t_0].$$

Note that $t_0$ depends on the initial metric on $U$ and can be very small, we extend the flow by the following inductive procedure.

\begin{claim}\label{improve}For any $x\in B_g(p,R-L\sqrt{t_0})$,
$$|Rm|\leq \frac{C_0}{t},\;\;inj_{g(t)}(x)\geq \sqrt{C_0^{-1}t},$$
where $C_0$ is the same as in Lemma \ref{curvestimate}.
\end{claim}
\begin{proof}[proof of claim]
Let $x\in B_0(p,R-L\sqrt{t_0})$. {By the shrinking ball lemma \ref{sbl}, we have for each $t\in[0, t_0]$
$$B_t(x, \sqrt{t_0}) \subset B_0(x,  \sqrt{t_0}+\beta \sqrt{a t_0}) \subset B_0(x,L \sqrt{t_0})\subset B_0(p,R).$$
{\bf Require: $L\geq  1+\beta \sqrt{a}$.}}

{By applying Theorem \ref{pic} on $B_t(x,\sqrt{t_0}/2)$, $t\in [0,t_0]$, with proper scaling, we have
$$Rm_{g(t)}+t\mathcal{I}\in\mathfrak{C}\quad\text{on}\;\; B_t(x,\sqrt{t_0}/16), \quad t \leq t_0 \wedge T(n,a) = t_0,$$
where we shrink $t_0$ again to make it small enough.

By volume comparison theorem,
$$V_0(x,\sqrt{t_0}/16)\geq \left(\sqrt{t_0}/16\right)^{2n} v.$$
Hence we may apply Lemma \ref{curvestimate} on $B_t(x,\sqrt{t_0}/16),\,t\in [0,t_0]$ to show that
$$|Rm|(x,t)\leq \frac{C_0}{t},\;\;inj_{g(t)}(x)\geq \sqrt{C_0 t^{-1}}.$$
Note that $C_0$ only depends on $n$ and $v$.}

\end{proof}

Repeat the conformal construction above with $U=B_0(p,R-L\sqrt{t_0})$ and $\rho=\sqrt{C_0^{-1}t_0}$. We may extend the Ricci flow solution to $t\in [0,(1+\mu)^2 t_0]$, where $(1+\mu)^2 =1+s_nC_0^{-1} $, on a slightly smaller set $\{x\in U: B_{t_0}(x,\rho)\subset\subset U\}$ in a way that for all $t\in [t_0,(1+\mu)^2 t_0]$
$$|Rm|\leq \frac{c_nC_0}{t_0} \leq \frac{a}{t}$$
where we have used the well-known doubling time estimate in Ricci flow, and $a$ is chosen to be $a=c_nC_0(1+\mu)$.

\begin{claim}
$\{x\in U: B_{t_0}(x,\rho)\subset\subset U\}\supset B_0(p,R-2L \sqrt{t_0}).$
\end{claim}
\begin{proof}[proof of claim]
For $x\in B_0(p,R-2L\sqrt{t_0})$,
$$ B_0(x,L\sqrt{t_0})\subset B_0(p,R-L\sqrt{t_0}).$$
By shrinking ball lemma \ref{sbl},
$$B_{t_0}\left(x,(L-\beta\sqrt{C_0})\sqrt{t_0}\right)\subset B_0(p,R-L\sqrt{t_0}).$$
The conclusion follows by our choice of $L(n,v)$.
\end{proof}
\noindent


Hence, we have a local solution of the Ricci flow with
$$|Rm|\leq \frac{a}{t}\quad\text{on}\;\;B_0(p,R-2L \sqrt{t_0})\times [0,t_1]$$
where $t_1=t_0(1+s_nC_0^{-1})=t_0(1+\mu)$.

Doing the above step inductively, we obtain a local solution to the Ricci flow on $B_g(p,s_k) \times [0,t_k]$ where
$$t_k=t_{k-1}(1+\mu)^2\quad\text{and}\quad s_k=R-2L \left(\sqrt{t_0}+\sqrt{t_1}+...+\sqrt{t_{k-1}} \right).$$

The process stops at the $k$-th step where $s_{k+1}<0$ or $t_{k+1}>\sigma_1(n,v)$, where $\sigma_1(n,v)$ is a positive constant determined by Lemma \ref{pic} and Theorem \ref{pic1pse}. If it is the first case, we restrict to eariler index $i<k$ where $s_i>R-1$ and $s_{i+1}\leq R-1$ but $t_i\leq \sigma_1$, then we can deduce
\begin{align*}
t_i \geq \frac{\mu^2}{4L^2(1+\mu)^4}=:\sigma_2(n,v).
\end{align*}
If it is the latter case where $t_k\leq \sigma_1$ and $t_{k+1}>\sigma_1$,
\begin{align*}
s_k&=R-2L \sqrt{t_k}\sum_{m=1}^k \frac{1}{(1+\mu)^m}\\
&\geq R-\frac{16(\mu+1)}{\mu}.
\end{align*}
In any cases, we have shown that $\exists \sigma(n,v), \delta(n,v),\,a(n,v)>0$ such that for all $R>\delta(n,v)$, there is a Ricci flow $g_R(t)$ defined on $B_g(p,R-\delta)$, $t\in [0,\sigma]$ such that for all $(x,t)\in B_g(p,R-\delta)\times (0,\sigma]$,
$$|Rm(g_R(t))|(x,t)\leq \frac{a(n,v)}{t}.$$

By letting $R\rightarrow \infty$ together with Shi's local estimate \cite{shi1989} and Chen's local estimate \cite{chen2009}, we can obtain a Ricci flow on $M\times [0,\sigma]$. The completeness follows from the shrinking ball lemma. $Rm(t) \in \mathfrak{C}$ follows from Corollary \ref{preservation of nonnegativity}.

\end{proof}

By rescaling the initial metric and theorem \ref{curvestimate}, the existence time will be infinity if the initial metric has maximal volume growth.
\begin{cor}\label{longtimesln}
Suppose $(M,g_0)$ is a complete Riemannian manifold with $Rm_{g_0}\in\mathfrak{C}$. If moreover $g_0$ has maximal volume growth, then there is a complete Ricci flow on $M\times [0,+\infty)$ satisfying
$$Rm(t) \in \mathfrak{C}, \quad |Rm|(t)\leq \frac{C_0}{t},\;\;\;inj_{g(t)}(x)\geq \sqrt{C_0t}$$
for some $C_0>0$.
\end{cor}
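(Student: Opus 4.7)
The plan is to leverage the scale-invariance of Theorem \ref{RF}'s hypotheses under maximal volume growth. Since $\Rm_{g_0} \in \mathfrak{C}$ implies $\Ric(g_0) \geq 0$, Bishop--Gromov monotonicity together with the basepoint-independence of the asymptotic volume ratio upgrades the hypothesis $V_{g_0}(p,r) \geq v r^n$ to the uniform estimate
\begin{equation*}
V_{g_0}(x,r) \geq v\, r^n \qquad \text{for every } x \in M,\ r > 0.
\end{equation*}

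For any $\Lambda > 0$, I would consider the rescaled initial metric $\hat g_0 := \Lambda^{-2} g_0$, which is still complete, weakly PIC1, and satisfies $V_{\hat g_0}(x,1) = \Lambda^{-n} V_{g_0}(x,\Lambda) \geq v$ for all $x \in M$. Theorem \ref{RF} then produces a complete Ricci flow $\hat g(s)$ on $M \times [0, T(n,v)]$ starting from $\hat g_0$ with $|\Rm(\hat g(s))| \leq C_1/s$ and $\Rm(\hat g(s)) \in \mathfrak{C}$. The parabolic rescaling $g^\Lambda(t) := \Lambda^2\, \hat g(t/\Lambda^2)$ is then a complete Ricci flow on $M \times [0, \Lambda^2 T(n,v)]$ with $g^\Lambda(0) = g_0$, $|\Rm(g^\Lambda(t))| \leq C_1/t$, and $\Rm(g^\Lambda(t)) \in \mathfrak{C}$. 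The injectivity radius lower bound $\mathrm{inj}_{g^\Lambda(t)}(x) \geq \sqrt{C_0^{-1}\, t}$ follows from Lemma \ref{curvestimate} applied at each basepoint $x$, using the scale-free volume bound just derived.

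To assemble these into a single flow on $M \times [0, \infty)$, I would send $\Lambda \to \infty$ via Hamilton compactness. For any $0 < t_1 < T_2 < \infty$ and $\Lambda$ large enough that $\Lambda^2 T(n,v) > T_2$, the flows $g^\Lambda$ are defined on $M \times [t_1, T_2]$ with uniform curvature bound $C_1/t_1$ and uniform injectivity radius bound $\sqrt{C_0^{-1}\, t_1}$; a diagonal extraction over exhausting windows $[t_1^{(j)}, T_2^{(j)}]$ with $t_1^{(j)} \downarrow 0$ and $T_2^{(j)} \uparrow \infty$ then yields a complete smooth Ricci flow $g(t)$ on $M \times (0, \infty)$ with the three claimed bounds, and $\Rm(g(t)) \in \mathfrak{C}$ passes to the limit since $\mathfrak{C}$ is closed.

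The main obstacle I anticipate is extending continuously to $t = 0$ so that $g(0) = g_0$. The monotonicity $\partial_t g^\Lambda = -2\,\Ric \leq 0$ (from $\Rm(g^\Lambda(t)) \in \mathfrak{C}$) implies $g^\Lambda(t) \leq g_0$ with $g^\Lambda(t) \nearrow g_0$ as $t \downarrow 0$, while the shrinking balls lemma (Lemma \ref{sbl}) applied with $|\Rm(g^\Lambda(t))| \leq C_1/t$ controls the distance distortion $d_{g^\Lambda(t)}$ versus $d_{g_0}$ uniformly in $\Lambda$. Together these give $g^\Lambda(t) \to g_0$ uniformly on compact sets as $t \downarrow 0$, so the limit flow extends continuously to $g(0) = g_0$, and each slice $g(t)$ remains complete.
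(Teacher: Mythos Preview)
Your approach is essentially the same as the paper's: the paper's proof is the single sentence ``By rescaling the initial metric and theorem \ref{curvestimate}, the existence time will be infinity if the initial metric has maximal volume growth,'' and you have correctly unpacked this via the scale-invariance of the hypotheses of Theorem~\ref{RF} under maximal volume growth, together with Lemma~\ref{curvestimate} for the injectivity radius bound. One minor remark: passing to a single flow on $M\times(0,\infty)$ is more naturally done with Shi's local derivative estimates and Chen's local estimate (exactly as in the last paragraph of the proof of Theorem~\ref{RF}) rather than pointed Cheeger--Gromov compactness, since you want the limit to live on the fixed manifold $M$; your two-sided distance control $d_{g_0}-\beta\sqrt{C_1 t}\le d_{g^\Lambda(t)}\le d_{g_0}$ is precisely what makes this work and also handles the attainment of $g_0$ at $t=0$.
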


\section{Construction of the diffeomorphism in Theorem \ref{main theorem}}
\begin{proof}
[Proof of Theorem \ref{main theorem}]

Let $(M,g(t))$ be a complete Ricci flow, $t\in [0, \infty)$. Suppose that $Ric(t)\geq 0$ and for a fixed point $p$, the injectivity radius $inj_p(t) \geq r_t$, where $r_t$ is increasing $\to \infty$. We show the following elementary construction of a diffeomorphism between $M$ and $\mathbb{R}^n$. The main theorem \ref{main theorem} follows from this construction and Corollary \ref{longtimesln}.

Let $exp_t: T_pM \to M$ be the exponential map w.r.t $g(t)$. Identify $T_pM$ with $\mathbb{R}^n$ by fixing a time-independent basis. For simplicity, we denote
\[\tilde{B}_t(r) := \{x \in \mathbb{R}^n | x^T g(p,t) x < r^2\},\]
\[B_t(r) = B_{g(t)}(p, r),\]
then $exp_t (\tilde{B}_t(r)) = B_t(r)$ as long as $r < r_t$. We can choose a sequence of times $t_1 < t_2 < ...$ such that $r_{t_i} > i+1$. Since $Ric(t) \geq 0$, $\{B_{t_i}(i)\}$ is an exhaustion of $M$, and $\{\tilde{B}_{t_i}(i)\}$ is an exhaustion of $\mathbb{R}^n$. Denote $\mathcal{E}_t = exp_t^{-1}$, then $\mathcal{E}_t$ is defined on $B_t(r_t)$, however it does not converge to a diffeomorphism since the image of a fixed domain may shrink to a point as $t\to \infty$. To overcome this inconvenience, we deform $\mathcal{E}_{t_{i+1}}(B_{t_i}(i))$ back to $\tilde{B}_{t_i}(i)$ for each $i$, this is achieved by simply reversing the $t$-parametrized family of local diffeomorphisms on $\mathbb{R}^n$ induced by $\mathcal{E}_t$.

\textbf{Claim:} For each $i$, there is a diffeomorphism $\Phi_i : \mathbb{R}^n \to \mathbb{R}^n$, such that $\Phi_i(\mathcal{E}_{t_{i+1}}(B_{t_i}(i))) = \tilde{B}_{t_i}(i)$ and $\Phi_i$ is identity on $\mathbb{R}^n \backslash \tilde{B}_{t_{i+1}}(i+0.9)$. Moreover, we have $\Phi_i \mathcal{E}_{t_{i+1}} = \mathcal {E}_{t_i}$ on $B_{t_i}(i)$.
\begin{proof}[Proof of claim]
For simplicity let's take $i=1$. For each $t_2 > t>t_1$, observe that $\mathcal{E}_{t}$ embeds $B_{t_1}(1)$ into $\mathbb{R}^n$, and the image satisfies $\left( \mathcal{E}_{t}(B_{t_1}(1)) \cup \tilde{B}_{t_1}(1) \right) \\
\subset \tilde{B}_{t}(1) \subset \tilde{B}_{t_2}(1)$.

For an interior point $x$ in the domain of $\mathcal{E}_t$, $t_1< t < t_2$, let $V(x,t)$ be the tangent vector of the $t$-parameterized curve $\mathcal{E}_t(x) \subset \mathbb{R}^n$. Note that the vector field $V$ generates the family of local diffeomorphisms which evolves $\tilde{B}_{t_1}(1)= \mathcal{E}_{t_1}(B_{t_1}(1))$ into $\mathcal{E}_{t_2}(B_{t_1}(1))$. Now choose a smooth cutoff function $\psi$ such that $\psi =1$ on $\tilde{B}_{t_2}(1)$ and $\psi = 0$ on $\mathbb{R}^n \backslash \tilde{B}_{t_2}(1.9)$ for all $t_1 \leq t \leq t_2$. Let $\Psi_s$, $s\in [0,1]$ be the family of diffeomorphisms generated by $-\phi V(x,t_2 - (t_2-t_1)s)$ with $\Psi_0 = id$. Then $\Phi_1 := \Psi_1$ is the desired diffeomorphism.
\end{proof}

Inductively, we can find a sequence of diffeomorphisms $\Phi_i: \mathbb{R}^n \to \mathbb{R}^n$, such that
\[\Phi_{1}...\Phi_{i-1}\mathcal{E}_{t_i}\]
is diffeomorphic on $B_{t_i}(i)$, and it agrees with $\mathcal{E}_{t_j}$ on $B_{t_j}(j)$ for each $j< i$. We can let $i\to \infty$ to get a diffeomorphism from $M$ to $\mathbb{R}^n$.

\end{proof}

\end{document}